\newtheorem{lemma}{Lemma}[section]
\newtheorem{theorem}{Theorem}[section]
\newtheorem{proposition}{Proposition}[section]
\newtheorem{corollary}{Corollary}[section]
\theoremstyle{definition}
\newtheorem{definition}{Definition}[section]
\theoremstyle{remark}
\newtheorem{remark}{Remark}[section]
\numberwithin{figure}{section}
\numberwithin{equation}{section}
\newcommand{\abs}[1]{\left\vert#1\right\vert}
\newcommand{\p}{\partial}
\newcommand{\norm}[1]{\left\Vert#1\right\Vert}
\newcommand{\dd}{\mathrm{d}}
\newcommand{\BV}{\mathrm{BV}}
\newcommand{\TV}{\mathrm{TV}}
\newcommand{\Lip}{\mathrm{Lip}\,}
\newcommand{\R}{{\mathbb R}}
\newcommand{\rmnum}[1]{\romannumeral #1}
\newcommand{\Rmnum}[1]{\expandafter\@slowromancap\romannumeral#1@}
\begin{document}
\title[Well-Posedness of Transonic Characteristic Discontinuities]
{Well-Posedness of Transonic Characteristic Discontinuities in
Two-Dimensional Steady Compressible Euler Flows}

\author{Gui-Qiang Chen}
\author{Vaibhav Kukreja}
\author{Hairong Yuan}

\address{G.-Q. Chen, School of Mathematical Sciences, Fudan University, Shanghai 200433, China; Mathematical Institute, University of Oxford, Oxford, OX1 3LB, UK; Department of Mathematics, Northwestern University, Evanston, IL 60208, USA }
\email{chengq@maths.ox.ac.uk}

\address{Vaibhav Kukreja, Instituto de Matem$\acute{\text{a}}$tica Pura e Aplicada (IMPA),
 Rio de Janeiro, Brazil; Department of Mathematics, Northwestern University,
         Evanston, IL 60208, USA}
         \email{\tt vaibhav@impa.br; \tt vkukreja@math.northwestern.edu}
\address{H. Yuan,
Department of Mathematics, East China Normal University, Shanghai
200241, China}
\email{hryuan@math.ecnu.edu.cn;\ \
hairongyuan0110@gmail.com}

\keywords{$L^1$ stability, uniqueness, transonic, characteristic
discontinuity, vortex sheet, entropy wave, steady flow, Euler
system, Lagrange coordinates, front tracking}

\subjclass[2000]{35L50,\ 35L65,\ 35Q31,\ 35B35,\ 76H05,\ 76N10}

\date{\today}

\begin{abstract}
In our previous work,
we have established the existence of transonic characteristic discontinuities separating
supersonic flows from a static gas in two-dimensional steady
compressible Euler flows under a perturbation with small total
variation of the incoming supersonic flow over a solid right-wedge.
It is a free boundary problem in Eulerian coordinates and, across the
free boundary (characteristic discontinuity), the Euler equations are of
elliptic-hyperbolic composite-mixed type. In this paper, we further
prove that such a transonic characteristic discontinuity solution is
unique and  $L^1$--stable
with respect to the small perturbation of the incoming supersonic
flow in Lagrangian coordinates.
\end{abstract}

\maketitle

\section{Introduction}

We are concerned with the well-posedness of transonic
characteristic discontinuities that separate supersonic flows from a static
gas in two-dimensional steady compressible Euler flows. The
governing equations are the following Euler system that consists of
conservation laws of mass, momentum, and energy:
\begin{eqnarray}
&&\partial_x(\rho u)+\partial_y(\rho v)=0,\label{mass}\\
&&\partial_x(\rho u^2+p)+ \partial_y(\rho uv)=0, \label{momen1}\\
&&\partial_x(\rho uv)+\partial_y(\rho v^2+p)=0,\label{momen2}\\
&&\partial_x(\rho uE+pu)+\partial_y(\rho vE+pv)=0,\label{energy}
\end{eqnarray}
where $E=\frac{1}{2}(u^{2}+v^{2}) + e$.
The unknowns $\rho$, $p$, $e$, and
$(u,v)$ represent the density, pressure, internal energy, and
velocity of the fluid, respectively.
Specifically, for a polytropic
gas, the constitutive relations are
\begin{equation}\label{state}
p=\rho^\gamma \exp\big(\frac{S}{c_\nu}\big), \qquad e=\frac{1}{\gamma-1}\frac{p}{\rho},
\end{equation}
where $S$ is the entropy, $c_\nu$ is  a positive
constant, and $\gamma>1$ is the adiabatic exponent.
The sonic speed $c$
is determined by
\begin{equation}\label{c}
c=\sqrt{\frac{\p p}{\p \rho}}=\sqrt{\frac{\gamma p}{\rho}}.
\end{equation}
In our previous work \cite{Chen-Kukreja-Yuan-2012},
we have established the existence of a weak entropy solution
to the following initial--boundary value problem:
\begin{eqnarray}\label{probeuler}
\begin{cases}
\eqref{mass}-\eqref{energy},&\text{in} \ \ x>0, y>g(x),\\
U=U_0, &\text{on}\ \ x=0, y>0,\\
p=\overline{p},\ \  \ \frac{v}{u}=g'(x), &\text{on}\ \ x>0, y=g(x),
\end{cases}
\end{eqnarray}
provided that the incoming supersonic flow $U_0$ is close in $BV$
to a reference state $\overline{U}_+$, where $U=(u,v,p,\rho)$,
and $\overline{U}_+=(\overline{u}, 0, \overline{p},
\overline{\rho}_+)$ is a uniform supersonic flow; that is, it is a
constant vector and
$\overline{u}>\overline{c}_+=\sqrt{\frac{\gamma\overline{p}}{\overline{\rho}_+}}$.
The unknowns in problem \eqref{probeuler} are the supersonic flow $U=U(x,y)$
and the free
boundary $\mathcal{D}_U=\{y=g(x), x\ge0\}$, which is a Lipschitz
curve passing through the origin. The free boundary is actually a transonic
characteristic discontinuity (vortex sheet and/or entropy wave)
separating the supersonic flow $U$ from the unperturbed static gas
$\overline{U}_-=(0,0,\overline{p}, \overline{\rho}_-)$ that is
subsonic (cf. Figure \ref{fig1}). Thus, problem \eqref{probeuler} is a free boundary problem
(in the Euler coordinates) and, across the free boundary, the Euler
equations are of elliptic-hyperbolic composite-mixed type.

\begin{figure}[h]\label{fig1}
\centering
  \setlength{\unitlength}{1bp}%
  \begin{picture}(300, 200)(10,0)
  \put(0,0){\includegraphics[scale=0.55]{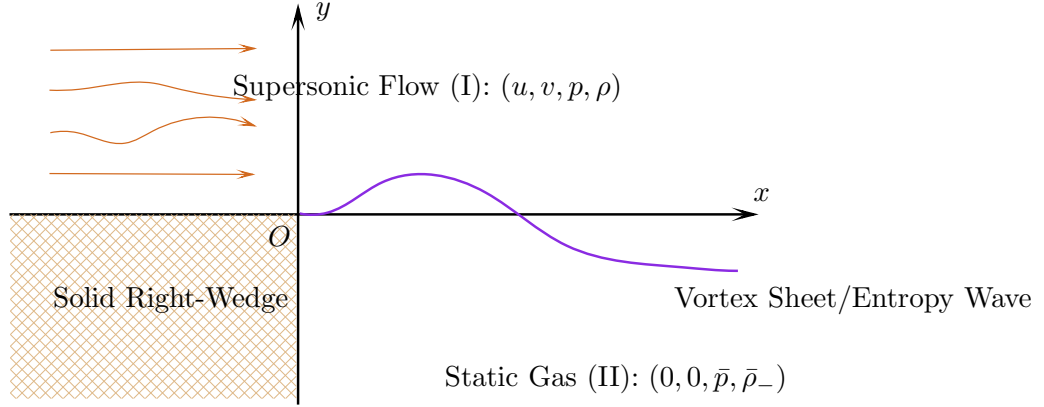}}

   \put(280,80){ ${ x}$}
     \put(115,150){ ${ y}$}
     \put(97,63){ ${ O}$}
\put(16,40){ Solid Right-Wedge}
\put(80,120){ { Supersonic Flow
(\Rmnum{1}): $(u,v,p,\rho)$}}
\put(160,10){ { Static Gas (\Rmnum{2}):
$(0,0,\bar{p},\bar{\rho}_-)$}}
\put(250,40){ {Vortex Sheet/Entropy Wave}}
\end{picture}
\caption{For supersonic flow passing the corner $O$ with a static gas $\overline{U}_-$
on the right of the solid right wedge (i.e., with velocity zero),
then a combined vortex sheet/entropy wave is generated to separate the static gas  below from the supersonic
flow above. }
\end{figure}

To further study the $L^1$ stability, we run into a
difficulty to define the $L^1$--distance between two solutions,
since two solutions $U$ and $V$ describing the supersonic flows are generically
defined in different domains depending on the location of their
respective free boundaries $\mathcal{D}_U$ and $\mathcal{D}_V$:
For some given ``time'' $x^{\ast}> 0$ and at some point $y^{\ast} \in \mathbb{R}$,
it may happen that, at
$(x^\ast, y^\ast)$, it is a static state for one solution, but is a supersonic flow
for the other.
To this end, we employ the special structure
of the Euler equations of the two-dimensional steady flows, and
introduce the Lagrangian coordinates $(\xi, \eta)$ associated to a solution, as done
in the study of transonic shocks
\cite{SChen-2006,Fang-Wang-Yuan-2011,Yuan-2006}, to transform the free
boundary to the positive $\xi$-axis, so that the solutions $U$ and $V$
are then defined in the same domain $\{\xi>0, \eta>0\}$ in
Lagrangian coordinates.
Then the initial data functions
$U_0(0,y)$ and $V_0(0,y)$ are transformed to the corresponding initial data
functions $U_0(0,\eta)$ and $V_0(0,\eta)$
in their respective Lagrange
coordinates.
Similar to the analysis in
\cite{Chen-Kukreja-Yuan-2012} for the construction of approximate
solutions, we construct the approximate front tracking solutions
$U(\xi,\eta)$ and $V(\xi,\eta)$ in the domain $\{\xi>0, \eta>0\}$
with the initial data $U_0(0,\eta)$ and $V_0(0,\eta)$ on $\xi=0$,
and the boundary data $p=\overline{p}$ on $\eta=0$ respectively, and
study the $L^1$--stability property of these solutions for the
initial-boundary value problem \eqref{prob} in Lagrangian coordinates,
as proposed accurately below.  With this merit of Lagrangian
coordinates, we can show the characteristic discontinuity solutions
are both unique for the given incoming supersonic flow in the Euler coordinates
and $L^1$--stable with respect to the small perturbations of the incoming supersonic
flows in the Lagrangian coordinates.

In Section 2, we present the Lagrangian
coordinates associated to a solution to problem \eqref{probeuler} and
formulate this problem in the Lagrangian coordinates as problem
\eqref{prob}. Then we briefly review how the existence of a weak
entropy solution is shown by using the front tracking method.
Section 3 is devoted to establishing the $L^1$--stability of problem
\eqref{prob}. Then, in Section 4, we show the existence of a semigroup associated
with problem \eqref{prob}. Finally, in Section 5, we show the
uniqueness of solutions to problem \eqref{prob} in the larger class of
viscosity solutions. By Wagner's Theorem \cite[Theorem 2]{Wagner-1987},
there is a one-to-one correspondence between
a bounded measurable weak solution of a hyperbolic system of
conservation laws and a bounded measurable weak solution of the
corresponding equations in Lagrangian coordinates. Thus, the uniqueness
results established for problem \eqref{prob} imply the uniqueness of
transonic characteristic discontinuities in Eulerian coordinates. This is
clarified in Section 6. The well-posedness results are summarized in
Theorem \ref{thmmain}. These results on the existence, uniqueness, and
$L^1$--stability in Lagrangian coordinates,
together with the existence theorem established in
\cite{Chen-Kukreja-Yuan-2012}, may be considered as a complete
mathematical well-posedness theory of such transonic characteristic
discontinuity solutions.

\section{Formulation of the Problem in Lagrangian Coordinates}

In this section we reformulate the problem in Eulerian coordinates to
the problem in Lagrangian coordinates and show the existence of entropy
solutions for the problem in Lagrangian coordinates.

\subsection{Lagrangian coordinates}
For a
piecewise $C^1$ smooth flow,
we may introduce the
Lagrangian transformation as used in \cite{Chen-Fang-2008,Yuan-2006} to
formulate problem \eqref{probeuler} in Lagrangian
coordinates, which enables us to straighten the streamlines and
hence treat a strict hyperbolic system derived from
\eqref{mass}--\eqref{energy}.

Let $U\in L^\infty(\mathbb{R}^+)$ and $g\in \Lip(\mathbb{R}^+)$ be
a {\it piecewise $C^1$} weak
entropy solution to problem \eqref{probeuler}.
Define
\begin{eqnarray}
\eta=\eta(x,y;x_0,y_0)=\int_{(x_0, y_0)}^{(x,y)} \rho u(s,t)\,\dd
t-\rho v(s,t)\,\dd s,
\end{eqnarray}
where $(x_0,y_0)$ is a fixed point on the transonic characteristic
discontinuity $\mathcal{D}_U$, and the integration is on any smooth
curve $\Gamma$ connecting $(x_0,y_0)$ with $(x,y)$ and lies in the
upper side of $\mathcal{D}_U$.
Since the domain $\{x>0, y>g(x)\}$ is
simply connected, by the conservation of
mass,
as well as the Rankine-Hugoniot (R-H) conditions for crossing the shock-front,
$\eta$ is a well-defined function of $(x,y)$ with $y\ge g(x)$, independent of
the choice of $\Gamma$. Clearly, we have
\begin{eqnarray}\label{etaeq}
\frac{\p \eta}{\p x}=-\rho v,\quad \frac{\p \eta}{\p y}=\rho u.
\end{eqnarray}
We also note that, by the last condition in \eqref{probeuler},
$\eta$
is independent of $(x_0,y_0)$ on $\mathcal{D}_U$.
In fact, for $(x_0,y_0)$  and $(x_0',
y_0')$ on the characteristic discontinuity, we have
\begin{eqnarray}
\eta(x,y; x_0,y_0)-\eta(x,y;
x_0',y_0')&=&\int_{(x_0,y_0)}^{(x_0',y_0')}\rho u(s,t)\, \dd t-\rho
v(s,t)\, \dd s\nonumber\\
&=&\int_{x_0}^{x_0'}(\rho u(s,g(s))\cdot g'(s)-\rho
v(s,g(s)))\,\dd s\nonumber\\
&=&0.
\end{eqnarray}
Since the characteristic discontinuity $g(s)$ is in $\Lip(\mathbb{R}^+,
\mathbb{R})$, it is differentiable almost everywhere,
so that the integrand
is discontinuous only at these points in a set of Lebesgue measure
zero, which is harmless for the above calculation. Hence, in the
following, we may write $\eta=\eta(x,y)$ with $\eta=0$ whenever
$y=g(x)$.

If $u, v$, and $\rho$ belong to $L^\infty$, and \eqref{mass} is satisfied
in the sense of distributions, we can also find a unique Lipschitz
continuous function $\eta$ so that $\eta(x_0,y_0)=0$ and
\eqref{etaeq} holds. As a matter of fact, using standard
mollification, we may approximate $\rho u$ and $\rho v$ in weak
convergence of distributions by $C^\infty$ functions $(\rho
u)^\epsilon$ and $(\rho v)^\epsilon$, for which the equality
$\p_x(\rho
u)^\epsilon+\p_y(\rho v)^\epsilon=0$ still holds.
Furthermore, by the Young
inequality, we have $\norm{(\rho u)^\epsilon}_{L^\infty}\le
C\norm{\rho u}_{L^\infty}$ and $\norm{(\rho
v)^\epsilon}_{L^\infty}\le C\norm{\rho v}_{L^\infty}$.
Then, as above, we can solve $\eta^\epsilon\in C^1$ so that
$\eta^\epsilon(x_0,y_0)=0$ and hence, in any bounded domain,
$\{\eta^\epsilon\}$ is a family of $C^1$ functions that are
uniformly bounded and equicontinuous. Thus, there is a subsequence
$\{\eta^k\}$ that converges uniformly to some $\eta$; by taking a
diagonal subsequence, we can find $\eta$ that is defined in the
whole domain and $\eta^k$ converges uniformly to $\eta$ in any
compact subregion, hence $\eta^k\to \eta$ in $\mathscr{D}'$. It is
obvious that $\eta(x_0,y_0)=0$, and \eqref{etaeq} holds by uniqueness
of limits in the sense of distributions. This then ensures that
$\eta$ is Lipschitz continuous. The uniqueness follows from the
well-known fact that a distribution with zero derivatives must be a
constant. Differentiating $\eta$ along a streamline (which is
Lipschitz continuous) and using \eqref{etaeq} yield that
it is constant
along the stream line.

Now we introduce the following Lagrangian transformation
$(x,y)\mapsto (\xi,\eta)$:
\begin{equation}\label{eq:3.10}
(\xi, \eta)=(x, \eta(x, y)).
\end{equation}
Then we have
\begin{equation}\label{eq:3.12}
\frac{\p(\xi,\eta)}{\p(x,y)}=\left(%
\begin{array}{cc}
  1 & 0 \\
  -\rho v & \rho u \\
\end{array}%
\right),
\end{equation}
thus
\begin{equation}\label{eq:3.13}
\p_x=\p_\xi-\rho v\p_\eta, \quad
\p_y=\rho u\p_\eta.
\end{equation}
This transform is Lipschitz continuous and one-to-one provided
$\rho u>0$. A simple computation shows that equations
\eqref{mass}--\eqref{momen2} may be written in divergence form:
\begin{equation}\label{Lagr}
\begin{cases}
\displaystyle{\p_\xi\Big(\frac{1}{\rho u}\Big)-\p_\eta\big(\frac{v}{u}\big)=0},\
\\
\p_\xi(u+\frac{p}{\rho u})-\p_\eta \big(\frac{pv}{u}\big)=0,
\ \\
\p_\xi v+\p_\eta p=0, \
\end{cases}
\end{equation}
or, as a symmetric system for $U=(u,v,p)^{\top}$,
\begin{equation}\label{eq:3.15}
A\p_\xi U+\, B\p_\eta U=0,
\end{equation}
with
\begin{equation}\label{eq:3.16}
{A}=\left(%
\begin{array}{ccc}
  u & 0 & \frac 1 \rho \\
  0 & u & 0 \\
  \frac 1 \rho & 0 & \frac{u}{\rho^2c^2} \\
\end{array}%
\right),\qquad B=\left(%
\begin{array}{ccc}
  0 & 0 & -v \\
  0 & 0 & u \\
  -v & u & 0 \\
\end{array}%
\right).
\end{equation}
For $\rho u\ne0$, the conservation law of energy becomes
$\p_\xi(\frac{u^2+v^2}{2}+\frac{c^2}{\gamma-1})=0$, that is,
\begin{eqnarray}\label{Bernoullilaw}
\frac{u^2+v^2}{2}+\frac{c^2}{\gamma-1}=b(\eta).
\end{eqnarray}
This is the well-known Bernoulli law. As $b(\eta)$ is given by the
initial data, in the following we focus on system \eqref{Lagr}
with $\rho$ determined by $U=(u,v,p)$ through \eqref{Bernoullilaw}.

The eigenvalues $\lambda$ of \eqref{eq:3.15} (i.e.
$|\lambda {A}- {B}|=0$) are
\begin{eqnarray}
{\lambda}_1&=&\frac{\rho
c^2u}{u^2-c^2}\left(\frac{v}{u}-\sqrt{M^2-1}\right),\label{lambda1new}\\
{\lambda}_2&=&0,\label{lambda2new}\\
{\lambda}_3&=&\frac{\rho
c^2u}{u^2-c^2}\left(\frac{v}{u}+\sqrt{M^2-1}\right),\label{lambda3new}
\end{eqnarray}
where $M=\frac{\sqrt{u^2+v^2}}{c}$ is the Mach number of the flow.
Then, for $u>c$, system \eqref{eq:3.15} is strictly
hyperbolic. The associated right-eigenvectors are
\begin{eqnarray}
r_1&=&\kappa_1(\frac{\lambda_1}{\rho}+v, -u, -\lambda_1u)^\top,\\
r_2&=&(u,v,0)^\top,\\
r_3&=&\kappa_3(\frac{\lambda_3}{\rho}+v, -u, -\lambda_3u)^\top,
\end{eqnarray}
where $\kappa_j$ can be chosen that ${r}_j \cdot \nabla\lambda_j
\equiv 1$, since the $j$th-characteristics
fields, $j=1,3$, are genuinely nonlinear.
Note that the second
characteristic field is always linearly degenerate: ${r}_2 \cdot
\nabla \lambda_2 =0$.

\subsection{Formulation of the problem in Lagrangian coordinates}
As we noted that the transonic characteristic discontinuity becomes the
positive $\xi$-axis in Lagrangian coordinates, we formulate the
problem in Eulerian coordinates into the following initial-boundary
value problem for equations \eqref{Lagr}:
\begin{eqnarray}\label{prob}
\begin{cases}
\eqref{Lagr}&\text{in}\ \ \xi>0, \eta>0,\\
U(0,\eta)=U_0(\eta) &\text{on}\ \ \xi=0, \eta>0,\\
p=\overline{p} &\text{on}\ \ \xi>0, \eta=0.
\end{cases}
\end{eqnarray}
Once we solved $U$ from this problem, we then obtain the free boundary
in Eulerian coordinates
\begin{eqnarray}\label{eqfree}
g(x)=\int_0^x\frac{v}{u}(\xi,0)\,\dd \xi.
\end{eqnarray}
These are the corresponding forms in Lagrangian coordinates
for problem \eqref{probeuler}.

\subsection{Existence of entropy solutions}
The entropy solutions of problem \eqref{prob} can be defined in the
standard way via integration by parts.
We note that the existence of entropy solutions to problem \eqref{prob} can be constructed easily by using the front tracking method as carried out in \cite{Chen-Kukreja-Yuan-2012}
provided that
$\norm{U_0-\overline{U}_+}_{\BV(\mathbb{R}^+)}$ is sufficiently
small. In particular, 
the following lateral Riemann problem with the boundary
data $p=\overline{p}$ on the
characteristic boundary $\{\eta=0\}$ is uniquely solvable.

\begin{lemma}\label{Lemma 2.1}
Consider the following lateral Riemann problem:
$$
\begin{cases}
\eqref{Lagr} & \text{in}\ \ \xi>0,\eta>0,\\
U=U_+ &\text{on}\ \ \xi=0,\eta>0,\\
p=\overline{p} &\text{on}\ \ \xi>0,\eta=0.
\end{cases}
$$
There exists $\varepsilon>0$ so that, if $U_+$ lies in the
ball $O_\varepsilon(\overline{U}_+)$ with center $\overline{U}_+$
and radius $\varepsilon$, then there is a unique admissible
solution that contains only a $3$-wave.
\end{lemma}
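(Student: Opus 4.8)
The plan is to realize the solution as a single admissible $3$-wave joining the incoming state $U_+$ to a constant boundary state $U_b$ carrying the prescribed pressure $\overline{p}$, and to determine this wave by the implicit function theorem.

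First, I would pin down the admissible wave structure of a self-similar solution $U=U(\eta/\xi)$ of \eqref{Lagr} in the quadrant $\{\xi>0,\,\eta>0\}$. In this plane a wave of the $j$th family is carried along the ray $\eta/\xi=\lambda_j$, and for states near $\overline{U}_+$ one has $\lambda_1<\lambda_2=0<\lambda_3$ by \eqref{lambda1new}--\eqref{lambda3new}. Hence a $1$-wave would be supported in $\{\eta/\xi<0\}$, outside the domain, while a $2$-wave, having speed $\lambda_2=0$, would be supported on the characteristic boundary $\{\eta=0\}$ itself rather than in the open domain; and near $\overline{U}_+$ one has $\lambda_3>0$, so a $3$-wave sits strictly inside. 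Consequently every admissible solution must have the form: the constant state $U_+$ for large $\eta/\xi$, a single admissible $3$-wave, and a constant state $U_b$ adjacent to $\{\eta=0\}$ whose trace there satisfies $p(U_b)=\overline{p}$; conversely, any such configuration solves the problem, and the whole issue reduces to showing that exactly one such $U_b$ exists.

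Next, I would set up and solve the resulting scalar equation. Let $\Phi_3(s;U_+)$ denote the admissible forward $3$-wave curve through $U_+$ (a centered $3$-rarefaction for one sign of $s$ and an admissible $3$-shock for the other), which is $C^{2}$ in $(s,U_+)$ by Lax's construction since the third field is genuinely nonlinear. Put $F(s,U_+):=p\big(\Phi_3(s;U_+)\big)-\overline{p}$. Since $\overline{U}_+=(\overline{u},0,\overline{p},\overline{\rho}_+)$ already realizes the boundary pressure, $F(0,\overline{U}_+)=0$, so for $U_+=\overline{U}_+$ the solution is the trivial one with no wave. For the derivative, $\partial_{s}\Phi_3(0;U_+)=r_3(U_+)$, and the explicit right-eigenvector $r_3=\kappa_3\big(\frac{\lambda_3}{\rho}+v,\,-u,\,-\lambda_3u\big)^{\top}$ has $p$-component $-\kappa_3\lambda_3 u$, which at $\overline{U}_+$ equals $-\frac{\kappa_3\,\overline{\rho}_+\overline{c}_+\overline{u}^{2}}{\sqrt{\overline{u}^{2}-\overline{c}_+^{2}}}\ne0$; since $\nabla_{U}p=(0,0,1)^{\top}$ in the variables $U=(u,v,p)^{\top}$, this gives $\partial_{s}F(0,\overline{U}_+)=\nabla_U p\cdot r_3(\overline{U}_+)\ne0$. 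The implicit function theorem then produces $\varepsilon>0$ and a $C^{1}$ map $s=s(U_+)$ with $s(\overline{U}_+)=0$ and $|s(U_+)|\le C\,|U_+-\overline{U}_+|$, such that for every $U_+\in O_\varepsilon(\overline{U}_+)$ the state $U_b:=\Phi_3(s(U_+);U_+)$ is the unique point on the $3$-wave curve of $U_+$ with $p(U_b)=\overline{p}$. Shrinking $\varepsilon$ keeps $\rho u>0$ and $\lambda_3(U_b)>0$, and renders the small-amplitude $3$-wave (rarefaction or Lax shock) automatically admissible; together with the first step this gives the unique admissible solution, which indeed contains only the $3$-wave. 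I would also record the Lipschitz dependence $U_b=U_b(U_+)$ and the resulting bound on the wave strength, since these are needed later for the $L^1$-stability analysis.

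I do not expect the implicit-function argument to be the obstacle; the delicate point is the first step — correctly reading off the admissible wave pattern against the \emph{characteristic} boundary $\{\eta=0\}$, in particular disposing of a boundary-tangent $2$-contact, and confirming that the single scalar condition $p(U_b)=\overline{p}$ is exactly balanced by, and transversal to, the one-parameter $3$-wave curve. Once this bookkeeping is settled, the nondegeneracy $\nabla p\cdot r_3(\overline{U}_+)\ne0$ is immediate from the eigenvector formula, and the rest is routine.
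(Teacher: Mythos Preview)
Your proposal is correct and follows essentially the same approach as the paper: both reduce the problem to solving a single scalar equation along the $3$-wave curve through $U_+$ and apply the implicit function theorem, using that the pressure component of $r_3(\overline{U}_+)$ is $-\kappa_3\lambda_3 u\ne0$. Your first step, arguing from the signs of $\lambda_1<0=\lambda_2<\lambda_3$ that only a $3$-wave can sit in the open quadrant, is a welcome addition that the paper leaves implicit; otherwise the arguments coincide, down to the Lipschitz estimate on the wave strength.
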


\begin{proof}
1. Note that system \eqref{Lagr} is strictly hyperbolic for $u>c$.
For each point $U$ with $u>c$, in its small neighborhood, we can
obtain $C^2$-wave curves $\Phi_j(\alpha; {U}), j=1,2,3$, so that
$\Phi_j(0;{U})={U}$ and $\frac{\dd\Phi_j}{\dd
\alpha}|_{\alpha=0}=r_j({U})$: $\Phi_j(\alpha; {U})$ is connected to
${U}$ from the upper side by a simple wave of $j$-family with strength
$|\alpha|$; for $\alpha>0$ and $j=1,3$, this wave is a rarefaction
wave, while, for $\alpha<0$ and $j=1,3$, this wave is a shock. For
$j=2$, the wave is always a characteristic discontinuity.

For our purpose, we note that there is also a $C^2$-curve $\Psi_3(\beta;{U})$
which consists of those states that can be connected to ${U}$ from
the lower side by a 3-wave of strength $\beta$. We have
$\Psi_3(0;{U})={U}$ and $\frac{\dd\Phi_j}{\dd
\beta}|_{\beta=0}=-r_3({U})$.

2. We set $U=(u,v,p)^\top$ in Lagrangian coordinates and use
$U_3$ to represent the third argument of the vector $U$. Then, to
solve the lateral Riemann problem, it suffices to show that there exists
a unique $\beta$ so that
$(\Psi_3(\beta;{U}_+))_3=\overline{p}$. Therefore,
we consider the following function:
$$
L(\beta;U_+)=\Psi_3(\beta;{U}_+))_3-\Psi_3(0;\overline{U}_+))_3.
$$
It is clear that $L(0;\overline{U}_+)=0$, and
$$
\frac{\p L(0;\overline{U}_+)}{\p\beta}
=-r_3(\overline{U}_+)_3=\kappa_3\bar{\lambda}_3 u\ne0.
$$
By the implicit function theorem, there exists $\varepsilon>0$ such that,
for $U_+\in O_\varepsilon(\overline{U}_+)$, there is a function
$\beta=\beta(U_+)$ so that $L(\beta(U_+);U_+)=0$. Then, similar to
Lemma 2.2 in \cite{Chen-Kukreja-Yuan-2012}, by using the Taylor
expansion up to second order (recall that $\Psi_j$ is $C^2$),
we obtain the following estimate:
\begin{eqnarray}
\beta=K|p_+-\overline{p}|+O(1)|U_+-\overline{U}_+|^2,
\end{eqnarray}
with a suitable constant $K$ depending only on $\overline{U}_+.$
\end{proof}

For the reflection of weak 1-waves off the characteristic boundary
$\{\eta=0\}$, we have

\begin{lemma}\label{lem:reflection}
Suppose that $U^l,U^m$, and $U^r$ are three states in
$O_\epsilon(\overline{U}^+)$ for sufficiently small $\epsilon$,
with $U^m=\Phi_1(\alpha_1;U^l)=\Phi_3(\alpha_3;U^r)$. Then
\begin{eqnarray}\label{reflection}
\alpha_3=-K_2\alpha_1+M_2|\alpha_1|^2,
\end{eqnarray}
with the constant $K_2>0$ and the quantity $M_2$ bounded in
$O_\epsilon(\underline{U}^+)$.
Furthermore, for $U^l = (u_l, v_l, p_l)$,
$|K_2| > 1, |K_2| < 1, \text{ and } |K_2| =1 $ when $v_l<0$, $v_l>0$, \text{ and } $v_l=0$,
respectively.
\end{lemma}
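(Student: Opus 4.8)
The plan is to turn the reflection relation into a scalar equation for $\alpha_3$ in terms of $\alpha_1$ and apply the implicit function theorem. In the front‑tracking construction the interaction under consideration is that of an incident $1$‑wave with the characteristic boundary $\{\eta=0\}$, so $U^l$ and $U^r$ are the boundary‑adjacent states before and after the interaction, and the boundary condition forces $p_l=p_r=\overline p$. For $|\alpha_1|,|\alpha_3|$ small, $U^m=\Phi_1(\alpha_1;U^l)$ is determined, and since $\Phi_3(\alpha_3;\cdot)$ is a $C^2$ local diffeomorphism, the vector equation $\Phi_3(\alpha_3;U^r)=U^m$ together with the scalar constraint $(U^r)_3=\overline p$ is equivalent to the single scalar equation
\[
\mathcal P(\alpha_1,\alpha_3):=\big(\Phi_3(\alpha_3;\cdot)^{-1}\big(\Phi_1(\alpha_1;U^l)\big)\big)_3-\overline p=0 .
\]
One has $\mathcal P(0,0)=0$ (because $p_l=\overline p$) and $\partial_{\alpha_3}\mathcal P(0,0)=-(r_3(U^l))_3\neq0$ by the explicit form of $r_3$ (whose third component is $-\kappa_3\lambda_3u$) together with $\lambda_3(U^l)\ne0$. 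Hence there is a unique $C^2$ function $\alpha_3=\alpha_3(\alpha_1;U^l)$ with $\alpha_3(0;U^l)=0$, and a second‑order Taylor expansion in $\alpha_1$ produces $\alpha_3=-K_2\alpha_1+M_2|\alpha_1|^2$ with the remainder coefficient $M_2$ uniformly bounded for $U^l$ in a fixed small ball around $\overline U_+$.

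It remains to analyze $K_2=-\partial_{\alpha_1}\alpha_3(0;U^l)$. Equivalently, equating to first order the change of $p$ across the incident $1$‑wave with that across the reflected $3$‑wave gives $(r_1(U^l))_3\alpha_1=(r_3(U^l))_3\alpha_3+O(\alpha_1^2)$, whence
\[
K_2=-\frac{(r_1(U^l))_3}{(r_3(U^l))_3}=-\frac{\kappa_1\lambda_1}{\kappa_3\lambda_3}\Big|_{U^l}.
\]
Near $\overline U_+$ one has $\lambda_1<0<\lambda_3$, while $\kappa_1$ and $\kappa_3$ are nonzero and of the same sign (they coincide at $v=0$ by the symmetry described next), so $\kappa_1\lambda_1/(\kappa_3\lambda_3)<0$ and $K_2>0$, as asserted. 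To compare $K_2$ with $1$, I would invoke the reflection symmetry $(u,v,p,\eta)\mapsto(u,-v,p,-\eta)$ of \eqref{Lagr}, which interchanges the $1$‑ and $3$‑characteristic families and yields $\lambda_1(u,v,p)=-\lambda_3(u,-v,p)$ and, after differentiating the eigenvector relation and the normalization $r_j\cdot\nabla\lambda_j\equiv1$, $\kappa_1(u,v,p)=\kappa_3(u,-v,p)$. Substituting these into the formula for $K_2$ gives the compact expression
\[
K_2=\frac{(\kappa_3\lambda_3)(u_l,-v_l,\overline p)}{(\kappa_3\lambda_3)(u_l,v_l,\overline p)},
\]
which already shows $K_2=1$ exactly when $v_l=0$.

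The main obstacle is the remaining monotonicity step: one must show that the scalar function $\psi(u,v):=\kappa_3\lambda_3$ — which equals $-(r_3)_3/u$ and, after eliminating $\rho$ via the Bernoulli law \eqref{Bernoullilaw}, depends only on $(u,v)$ — is strictly monotone in $v$ in a neighborhood of $(\overline u,0)$; the sign of $\partial_v\psi|_{v=0}$ then dictates, through the displayed formula for $K_2$, whether $|K_2|$ lies above or below $1$ according to the sign of $v_l$, which is the claimed dichotomy. This amounts to computing the genuine‑nonlinearity factor $\kappa_3=(r_3\cdot\nabla\lambda_3)^{-1}$ explicitly from \eqref{lambda1new}--\eqref{lambda3new} and differentiating $\kappa_3\lambda_3$ along $\{p=\overline p\}$ with $u$ held fixed. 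The computation is made tractable by the fact that, with $u$ fixed, $\rho$, $c$, $M$ and $\sqrt{M^2-1}$ all have vanishing first $v$‑derivative at $v=0$ (immediate from $\rho=\gamma p/c^2$ and $c^2=(\gamma-1)\big(b-\tfrac12(u^2+v^2)\big)$), so that $\partial_v(\kappa_3\lambda_3)|_{v=0}$ collapses to a ratio of polynomials in $u^2$ and $c_0^2=c^2|_{v=0}$ whose numerator is readily checked to be of one sign; verifying that sign completes the proof.
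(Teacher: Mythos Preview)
Your setup is correct and matches the spirit of the paper's argument: the paper does not give a self-contained proof here but only states that ``the proof here follows the similar argument for Lemma 2.4 in \cite{Chen-Kukreja-Yuan-2012}.'' Your implicit-function-theorem reduction to the scalar pressure constraint $p_r=\overline p$ is exactly the mechanism used in the paper's proof of Lemma~\ref{Lemma 2.1}, and your leading-order identification
\[
K_2=-\frac{(r_1(U^l))_3}{(r_3(U^l))_3}=-\frac{\kappa_1\lambda_1}{\kappa_3\lambda_3}\Big|_{U^l}
\]
is correct. The reflection-symmetry observation $(u,v,p,\eta)\mapsto(u,-v,p,-\eta)$, giving $\lambda_1(u,v,p)=-\lambda_3(u,-v,p)$ and $\kappa_1(u,v,p)=\kappa_3(u,-v,p)$, is a genuinely nice device: it immediately yields $K_2>0$ and $K_2=1$ at $v_l=0$, and it reduces the trichotomy on $|K_2|$ to a single monotonicity statement about $\psi=\kappa_3\lambda_3$ in $v$ along $\{p=\overline p\}$. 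This is likely cleaner than a direct computation in Eulerian coordinates as in \cite{Chen-Kukreja-Yuan-2012}.

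The only gap is that you stop short of actually carrying out the decisive computation: you must verify that $\partial_v(\kappa_3\lambda_3)\big|_{v=0}$ has the \emph{same} sign as $(\kappa_3\lambda_3)\big|_{v=0}$ (equivalently, that $\partial_{v_l}K_2\big|_{v_l=0}=-2\psi'(0)/\psi(0)<0$). You correctly note that at $v=0$ the $v$-derivatives of $\rho,c,M,\sqrt{M^2-1}$ vanish, so $\partial_v\lambda_3|_{v=0}=\rho c^2/(u^2-c^2)>0$ is immediate; but the contribution from $\partial_v\kappa_3|_{v=0}$ requires computing $\tilde r_3\cdot\nabla\lambda_3$ and differentiating it, and you have not shown that the resulting expression has a definite sign. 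Since the entire content of the ``Furthermore'' clause rests on this sign, the proposal is a correct outline but not yet a complete proof; you should finish that calculation (or, equivalently, compute $(r_1)_3/(r_3)_3$ directly and compare with $1$).
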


The proof here follows the similar argument for Lemma 2.4 in \cite{Chen-Kukreja-Yuan-2012}.

For our current problem, we have to modify slightly the Glimm functional as done in \cite{Chen-Kukreja-Yuan-2012}. In particular, this will allow us to show later that the Lyapunov functional $\Phi$ (see Section 3) decreases when a weak wave of 1-family is reflected off the boundary. We now introduce the following version of the Glimm functional:
\begin{eqnarray*}
\mathcal{G}(\xi)=\mathcal{V}(\xi)+\kappa \mathcal{Q}(\xi),
\end{eqnarray*}
where $\kappa > 0$ is a large constant to be chosen. The terms $\mathcal{V}$
and $\mathcal{Q}$ are explained below:

\medskip
{$\bullet$\it The weighted strength term $\mathcal{V}(\xi)$.} We define the total (weighted) strength
of weak waves in $U^{\delta}(\xi, \cdot)$ as
\begin{eqnarray}
\mathcal{V}(\xi) = \sum_{\alpha} |b_{\alpha}|.
\end{eqnarray}
Here, for a weak wave $\alpha$ of $i_\alpha$-family, we
define its weighted strength as
\begin{equation*}
b_{\alpha} =
\begin{cases} k_{+}\alpha & \text{if $i_{\alpha} =1$},
\\
\alpha &\text{if $i_{\alpha} =2,3$},
\end{cases}
\end{equation*}
where $k_{+} > K_{2}$ is a positive constant which has been fixed by considering the reflection coefficient $K_{2}$ in the strength $\alpha_{3}$ of the reflected 3-waves in the interaction between weak 1-wave and the characteristic boundary $\{\eta=0\}$ as given in Lemma \ref{lem:reflection}.

\medskip
{$\bullet$ \it The interaction potential term $\mathcal{Q}(\xi)$.} The
interaction potential term we used here is a modification to the one
introduced by Glimm, that is,
\begin{eqnarray}
\mathcal{Q}(\xi)= \sum_{(\alpha, \beta) \in \mathscr{A}} |b_{\alpha} b_{\beta}| + \sum_{\beta \in \mathscr{A}_{b}} |b_{\beta}|
= \mathcal{Q}_{\mathcal{A}} + \mathcal{Q}_{b},
\end{eqnarray}
where the set $\mathscr{A}(\xi)$ is of all the couples $(b_{\alpha},b_{\beta})$
of approaching wave-fronts.
The term $\mathcal{Q}_{b}$ in our wave interaction potential is an additional term, in comparison with the Cauchy problem; and if a weak wave $\alpha$ of $1$-family is approaching the boundary, we write $\alpha \in \mathscr{A}_{b}$.

Now, for given initial data $U_0$, we may construct an approximate
solution $U^\delta$ by a front tracking algorithm introduced by
\cite{Holden-Risebro-2002}, where $\delta$ is a small parameter
measuring the accuracy of the solution, which controls the following
four types of errors generated by the algorithm:
\begin{itemize}
\item Errors in the approximation of initial data;
\item Errors in the speeds of shock, characteristic discontinuities (vortex sheet and entropy wave), and rarefaction fronts;
\item Errors by approximating the rarefaction waves by piecewise constant rarefaction fronts;

\item Errors from removing all the fronts with generation higher
than $N \in \mathbb{Z}_+$ ($N$ depends on $\delta$).
\end{itemize}

The construction of a Glimm functional as above
provides the necessary uniform
estimates that guarantee the existence of a subsequence of  $U^\delta$
which converges to a bounded entropy solution of \eqref{prob} in
$C([0,T]; L^1(\mathbb{R}^+))$ for any $T>0$.

The following sections are devoted to the $L^1$--stability and
uniqueness properties for problem \eqref{prob}.

\section{A Lyapunov functional and $L^1$ stability}

This section is devoted to establishing the $L^1$--stability of two
entropy solutions ${U}$ and ${V}$ of problem \eqref{prob} with respective initial
data $U_0$ and $V_0$ obtained
by the front tracking method:
\begin{eqnarray}\label{eq31}
\norm{U(\xi,\cdot)-V(\xi,\cdot)}_{L^1(\mathbb{R}^+)}\le
C \norm{U_0-V_0}_{L^1(\mathbb{R}^+)},
\end{eqnarray}
with a constant $C$ depending only on the equations and the
reference state $\overline{U}_+$. To this end, following
an idea in \cite{Bressan-Liu-Yang-1999}, we
introduce a Lyapunov functional $\Phi(U^{\delta_1},V^{\delta_2})$ by
incorporating additional new waves generated from the weak wave
interactions with the characteristic boundary $\{\eta=0\}$,
which is equivalent to the
$L^{1}(\mathbb{R}^+)$--distance:
\begin{eqnarray}\label{l1equiv}
C^{-1}_{1} \norm{U(\xi,\cdot) - V(\xi,\cdot)}_{L^{1}} \leq
\mathit{\Phi}(U, V) \leq C_{1}\|U(\xi,\cdot) -
V(\xi,\cdot)\|_{L^{1}}
\end{eqnarray}
and decreases as $\xi$ increases:
\begin{eqnarray}\label{decreases}
\mathit{\Phi}(U^{\delta_1}(\xi_{2},\cdot),
V^{\delta_2}(\xi_{2},\cdot)) -
\mathit{\Phi}(U^{\delta_1}(\xi_{1},\cdot),
V^{\delta_2}(\xi_{1},\cdot))
\leq
C_{2}\max(\delta_1,\delta_2)(\xi_{2} - \xi_{1}), \hspace{2mm}
\forall \hspace{2mm}
 \xi_{2} > \xi_1 > 0,
\end{eqnarray}
for some constants $C_i$, $i$ = 1, 2, where $U^{\delta_1}$ and
$V^{\delta_2}$ are two approximate solutions corresponding to
the initial data $U_0$ and $V_0$ obtained by the wave-front tracking,
with accuracy $\delta_1$ and $\delta_2$, respectively.

Estimate \eqref{decreases} has many important implications.
Firstly, taking $U_0=V_0$ and $\xi_1=0$ in \eqref{decreases} and
using \eqref{l1equiv}, we obtain
$\lim_{\delta_1, \delta_2\to 0}
\norm{U^{\delta_1}(\xi,\cdot)-U^{\delta_2}(\xi,\cdot)}_{L^1(\mathbb{R}^+)}=0$.
Thus, the whole sequence of approximate solutions $\{U^\delta\}$
converges to the same limit. Secondly, taking directly
$\delta_1,\delta_2\to 0$ yields \eqref{eq31}.
This again implies that the entropy solution of problem \eqref{prob}
obtained by the front tracking method is unique.

In the following, we first define specifically the Lyapunov
functional and then verify \eqref{l1equiv} and \eqref{decreases}.

\subsection{Definition of the Lyapunov functional and its equivalence
to the $L^1$--distance}
Similar to \cite{Bressan-Liu-Yang-1999, Lewicka-Trivisa-2002,
Liu-Yang-1999}, when $\xi$ is fixed, for each $\eta\ge0$, we solve a
``Riemann problem" with the below-state $U(\xi,\eta)$, the upper-state
$V(\xi,\eta)$, by moving along the Hugoniot curves $S_{1},
C_{2}$, and $S_{3}$ of system \eqref{Lagr} in the phase space.
We use $h_{i}(\xi,\eta)$ to denote the strength of the $i$-Hugoniot
wave in this ``Riemann solver", which is totally determined by
$U(\xi,\eta)$ and $V(\xi,\eta)$. As in the standard Riemann
solver, $\sum_{i=1}^3|h_i(\xi,\eta)|$ is equivalent to
$|U(\xi,\eta)-V(\xi,\eta)|$, with a constant depending only on
$\overline{U}_+$.

Now we define the weighted $L^{1}$--strengths:
\begin{eqnarray}
q_{i}(\xi,\eta) = c^{a}_{i}h_{i}(\xi,\eta),
\end{eqnarray}
where the constants $c^{a}_{i}$ are to be determined later, depending
on the estimates on the wave interactions and reflections off the characteristic
boundary $\{\eta=0\}$ (as done in Section 2).

Then we define the Lyapunov functional:
\begin{equation}
\mathit{\Phi}(U(\xi,\cdot), V(\xi,\cdot)) = \sum_{i =
1}^{3}\int\limits_{0}^{\infty}\
|q_{i}(\xi,\eta)|W_{i}(\xi,\eta)\,\dd\eta,
\end{equation}
with the weight
\begin{equation}\label{3.6}
W_{i}(\xi,\eta) = 1 + \kappa_{1}\mathcal{A}_{i}(\xi,\eta) +
\kappa_{2}\big(\mathcal{Q}(U)(\xi) + \mathcal{Q}(V)(\xi)\big).
\end{equation}
Here $\kappa_{1}$ and $\kappa_{2}$ are two constants to be specified
later; $\mathcal{Q}(U)(\xi)$ and $\mathcal{Q}(V)(\xi)$
are the Glimm's total wave interaction potentials in
$U$ and $V$ at ``time" $\xi$, respectively;
$\mathcal{A}_{i}(\xi,\eta)$ is the total strength of
waves in $U$ and $V$ which approach the $i$-wave $q_{i}(\xi,\eta)$
at ``time" $\xi$,  defined by
\begin{equation}
\mathcal{A}_{i}(\xi,\eta) = \mathcal{B}_{i}(\xi,\eta) + \mathcal{C}_{i}(\xi,\eta)
\end{equation}
and
\begin{eqnarray}
\mathcal{B}_{i} &=& \Biggl(\sum_{\substack{\alpha \in \mathcal{J}(U) \cup
\mathcal{J}(V) \\ \eta_{\alpha} < \eta, i < k_{\alpha} \leq 3}} +
\sum_{\substack{\alpha \in \mathcal{J}(U) \cup \mathcal{J}(V) \\
\eta_{\alpha} > \eta, 1 \leq k_{\alpha} < i}}\Biggr) |\alpha |,  \nonumber \\
\mathcal{C}_{i} &=& \left\{ \begin{array}{ll}
(\sum_{\substack{\alpha \in \mathcal{J}(U), \eta_{\alpha} < \eta, k_{\alpha}
= i}} + \sum_{\substack{\alpha \in \mathcal{J}(V), \eta_{\alpha} > \eta,
k_{\alpha} = i}})|\alpha| &\quad \mbox{ if $q_{i}(\eta) < 0$,} \\
(\sum_{\substack{\alpha \in \mathcal{J}(V), \eta_{\alpha} < \eta,
k_{\alpha} = i}} + \sum_{\substack{\alpha \in \mathcal{J}(U),
\eta_{\alpha} > \eta, k_{\alpha} = i}})|\alpha| &\quad \mbox{ if
$q_{i}(\eta) > 0$,}
        \end{array} \right.       \nonumber
\end{eqnarray}
where $\mathcal{J}(U)$ and $\mathcal{J}(V)$ are the sets of fronts at ``time" $\xi$
in $U$ and $V$, respectively,
$\eta_\alpha$ is the position of the front $\alpha$, and $k_\alpha$
is the characteristic family associated to the front $\alpha$.
In the following, we sometimes drop the dependence of $\xi$ and
write simply $q_i(\eta),
W_i(\eta), \mathcal{A}_i(\eta)$,  etc. when no confusion arises.

Since, for any $U(\xi, \cdot)$, $V(\xi, \cdot)$ $\in \BV $
with $TV\big({U}(\cdot)\big) + TV\big({V}(\cdot)\big)$ sufficiently small
(depending only on $\kappa_1, \kappa_2$, and $c_i^a$), we have
\begin{align}
& C^{-1}_{0} \norm{U(\xi,\cdot) -
V(\xi,\cdot)}_{L^{1}(\mathbb{R}^+)} \leq
\sum_{i=1}^{3}\int\limits^{\infty}\limits_{0}
|q_{i}(\eta)|\,d\eta
\leq C_{0}\|U(\xi,\cdot) - V(\xi,\cdot)\|_{L^{1}(\mathbb{R}^+)}, \nonumber \\
& 1 \leq W_{i}(\eta) \leq 2, \qquad i = 1, 2, 3, & \nonumber
\end{align}
for some constant $C_{0}$ depending essentially only on the
system and $\overline{U}_+$. Therefore, for any $\xi \ge 0$,
\begin{equation}\label{eqlnorm}
C^{-1}_{1} \norm{U(\xi,\cdot) - V(\xi,\cdot)}_{L^{1}(\mathbb{R}^+)}
\leq \mathit{\Phi}(U, V) \leq C_{1}\|U(\xi,\cdot) -
V(\xi,\cdot)\|_{L^{1}(\mathbb{R}^+)},
\end{equation}
where $C_{1}$ is independent of $\xi$, $U$, and $V$.

\subsection{Decreasing of the Lyapunov functional} From now on, we
examine how the Lyapunov functional
$\mathit{\Phi}(U^{\delta_1}(\xi,\cdot), V^{\delta_2}(\xi,\cdot))$
evolves in the flow direction $\xi > 0$, where $U^{\delta_1}$ and
$V^{\delta_2}$ are approximate solutions obtained by the front tracking
method from the initial data $U_0$ and $V_0$, respectively,
and we also
set $\delta=\max(\delta_1,\delta_2)$ to control the errors. For
simplicity, in the following, we also write $U^{\delta_1}$ and
$V^{\delta_2}$ as $U$ and $V$, respectively.

Denote $\lambda_{i}(\eta)$ the speed of the $i$-wave $q_{i}(\eta)$ along
the Hugoniot curve in the phase space (it is not the characteristic speed
except for the characteristic discontinuity).
At $\xi> 0$ which is not an interaction ``time" of the waves either in $U$
or $V$, for $\mathcal{J}=\mathcal{J}(U)\cup\mathcal{J}(V)$ (it is a
finite set), it holds that
\begin{align}
& {\dd \over \dd\xi}\mathit{\Phi}\left(U(\xi), V(\xi)\right) \nonumber \\
&= \sum_{\alpha \in \mathcal{J}}\sum_{i = 1}^{3}\Big(
\abs{q_{i}(\eta_{\alpha}^{-})}W_{i}(\eta_{\alpha}^{-}) -
\abs{q_{i}(\eta_{\alpha}^{+})}W_{i}(\eta_{\alpha}^{+})\Big)\dot{\eta}_{\alpha} + \sum_{i = 1}^{3}\abs{q_{i}(b)}W_{i}(b)\dot{\eta}_{b}    \nonumber \\
&= \sum_{\alpha \in \mathcal{J}}\sum_{i = 1}^{3}\Big(\abs{q_{i}
(\eta_{\alpha}^{-})}W_{i}(\eta_{\alpha}^{-})
\left(\dot{\eta}_{\alpha} - \lambda_{i}(\eta_{\alpha}^{-})\right)
 - \abs{q_{i}(\eta_{\alpha}^{+})}W_{i}(\eta_{\alpha}^{+})
 \left(\dot{\eta}_{\alpha} - \lambda_{i}(\eta_{\alpha}^{+})\right)\Big)
 \nonumber \\
& \quad + \sum_{i = 1}^{3}\abs{q_{i}(b)}
W_{i}(b)\big(\dot{\eta}_{b} + \lambda_{i}(b)\big),     \nonumber
\end{align}
where $\dot{\eta}_{\alpha}$ is the speed of the Hugoniot wave
$\alpha \in \mathcal{J}$, $b = 0^{+}$ stands for the points near the
boundary $\eta=0$, and $\dot{\eta}_{b}$ is the slope of the
boundary, which is actually zero.
For the second equality, we used
the fact that, when $U_0-V_0\in L^1(\mathbb{R}^+)$,
if $\alpha'$ is the uppermost front in $\mathcal{J}$, then
$\lambda_i(\eta_{\alpha'}^+)=0$ for $i=1,2,3$.
Also, suppose that the
front $\beta'\in\mathcal{J}$ is the closest to $\eta=0$ so that
$\lambda_i(b)=\lambda_i(\eta_{\beta'}^-)$. Define
\begin{align}
E_{\alpha,i}& =\abs{q_{i}^{+}}W_{i}^{+}\left(\lambda_{i}^{+}
- \dot{\eta}_{\alpha} \right) - \abs{q_{i}^{-}}W_{i}^{-}
\big(\lambda_{i}^{-} - \dot{\eta}_{\alpha} \big), \\
E_{b,i}& = \abs{q_{i}(b)}W_{i}(b)\big(\dot{\eta}_{b} +
\lambda_{i}(b)\big),
\end{align}
where $q_{i}^{\pm}$ = $q_{i}(\eta^{\pm}_{\alpha})$, $W^{\pm}_{i} =
W_{i}(\eta^{\pm}_{\alpha})$, and $\lambda^{\pm}_{i}$ =
$\lambda_{i}(\eta^{\pm}_{\alpha})$. Then
\begin{equation}
{\dd \over \dd\xi}\mathit{\Phi}\left(U(\xi), V(\xi)\right) =
\sum_{\alpha \in \mathcal{J}}\sum_{i=1}^{3}E_{\alpha,i} +
\sum_{i=1}^{3}E_{b,i}.
\end{equation}

Our main goal is to establish the following bounds:
\begin{eqnarray}
&& \sum_{i=1}^{3}E_{b,i} \leq 0 \quad\text{ (near the boundary)},\label{bdest} \\
&& \sum_{i=1}^{3}E_{\alpha,i} \leq \mathcal{O}(1)\delta\abs{\alpha}
\quad\text{ (when $\alpha$ is a weak wave in
$\mathcal{J}$)}.\label{wkest}
\end{eqnarray}
If these are established,
from \eqref{bdest}--\eqref{wkest}, we conclude
\begin{equation}\label{Phi}
{\dd\over \dd\xi}\mathit{\Phi}\left(U(\xi), V(\xi)\right) \leq
\mathcal{O}(1)\delta.
\end{equation}

If the constant $\kappa_{2}$ in the Lyapunov functional is chosen
large enough, by the Glimm interaction estimates, all the weight
functions $W_{i}(\eta)$ decrease at each ``time'' where two fronts
of $U$ or two fronts of $V$ interact. By the self-similarity
of the Riemann solutions, $\mathit{\Phi}$ decreases at this ``time''.
Integrating \eqref{Phi} over the interval $\left[0, \xi\right]$, we
obtain
\begin{equation}\label{decpot}
\mathit{\Phi}\left(U(\xi), V(\xi)\right) \leq
\mathit{\Phi}\left(U(0), V(0)\right) + \mathcal{O}(1)\delta \xi
\end{equation}
as desired. Actually, \eqref{wkest} has been proved in
\cite{Bressan-Liu-Yang-1999} or \cite{Holden-Risebro-2002} via a case
by case analysis.

We also need to consider what happens to the Lyapunov functional $\Phi$ if a weak wave
reflected off the boundary.
Even in this case, the weights $W_i(\eta)$ can be made to decrease
across ``time" $\xi = \tau$ when a $1$-wave interacts with the boundary.
Using the modified Glimm interaction potential $\mathcal{Q}$
and Lemma \ref{lem:reflection} above,
it holds that, across ``time" $\xi = \tau$ when a weak 3-wave reflected off
the characteristic boundary $\{\eta=0\}$, both $\mathcal{G}$ and $\mathcal{Q}$ decrease,
if $k_{+}$ is chosen sufficiently large (see Section 2).
One sees that the result holds if $\kappa_2 \gg \kappa_1$  in \eqref{3.6}.
What remains to be proved is \eqref{bdest} near the boundary.

\subsection{Estimates near the boundary}
In this section, we focus on the estimate of \eqref{bdest} near the
boundary. It is different from those for the Cauchy problem. We
exploit the exclusive property of the boundary condition in
\eqref{prob}: The flows $U$ and $V$ have the same pressure near
the boundary. Then we construct a piecewise constant weak solution
only along the Hugoniot curves determined by the Riemann date $U(b)$
and $V(b)$, which are the states of $U$ and $V$ near the boundary
(that is, the point $b$ is some fixed point $(\xi_b, \eta_b)$ near the
positive $\xi$-axis with $\eta_b > 0$). Let $h_{i}(b)$ be the
strength of the $i$-th shock in the Riemann problem determined by
$U(b)$ and $V(b)$, and $\lambda_{i}$ be the corresponding wave
speed. Then, as the second-family is linearly degenerate, we infer that
$\lambda_2\equiv 0$. Recalling that $\dot{\eta}_b=0$,
we conclude
\begin{eqnarray}
E_{b,2}\equiv 0.
\end{eqnarray}

To estimate $E_{b,1}+E_{b,3}$, we consider the following two cases.

{\it Case 1: $h_{1}(b) = 0$}. If $h_1(b)=0$,
then there is no first-family wave in the Riemann solution to the Riemann problem
$(U(b),V(b))$.  Since the second-family waves are the characteristic
discontinuities, the pressure of the middle-state $U_m$ keeps
unchanged, i.e. $p_m=\overline{p}$. As the third-family is
genuinely nonlinear, there must be a jump of pressure across a wave
of the third-family. Thus, in this case, there must be no wave of the
third-family, that is, $h_3(b)=0$. Hence, $q_1(b)=0=q_3(b)$. We
conclude in this case that
$$
E_{b,i}=0, \qquad i=1,3.
$$

{\it Case 2: $h_1(b)\ne0$}.  For this case, as analyzed in Case 1,
one concludes that $h_{3}(b) \neq 0$ as well.  Starting from $U(b)$,
go along the 1-Hugoniot curve to reach $U_{1}$, then possibly along
the 2-characteristic Hugoniot curve to reach $U_{2}$, and the 3-Hugoniot
curve to reach $V(b)$. Furthermore, among the first-family and third-family
waves, they must be of distinct type, that is, there can be
a 1-compressive shock (Lax shock) and a 3-decompressive shock (non Lax
shock) or vice versa in the Riemann solution to keep the pressure
unchanged in the approximate solutions $U$ and $V$. We need to show
that
\begin{eqnarray}\label{preest}
|h_3(b)|=\mathcal{O}(1)|h_1(b)|.
\end{eqnarray}
If this is true, observing that $\lambda_1<0$ and $\lambda_3>0$ at
$\overline{U}_+$, so that $q_i(b)=c_i^ah_i(b)$ with the constants
$c_i^a>0$ to be chosen, we have
\begin{eqnarray*}
E_{b,1}+E_{b,3}&=&c_1^a|h_1(b)|W_1(b)\lambda_1(b)+c_3^a|h_3(b)|W_3(b)\lambda_3(b)\\
&=&|h_1(b)|\big(-c_1^aW_1(b)|\lambda_1(b)|+c_3^a\mathcal{O}(1)W_3(b)|\lambda_3(b)|\big)\\
&\le&|h_1(b)|\big(-c_1^a|\lambda_1(b)|+2c_3^a\mathcal{O}(1)|\lambda_3(b)|\big)\\
&\le&0,
\end{eqnarray*}
by using the fact that $\left|\frac{\lambda_3(b)}{\lambda_1(b)}\right|$ is bounded
(depending only on $\overline{U}_+$) and then choosing $c_1^a$
quite large. This completes the proof of \eqref{bdest}.

Then what left is to prove \eqref{preest}. We know
$U_1=S_1(h_1)(U_b)$
and $V_b=S_3(h_3)(U_2)$. For $U$ close to $\overline{U}_+$, since
$\frac{\dd S_1(\alpha)(U)}{\dd \alpha}\left|_{\alpha=0}=r_1(U)\right.$ and
especially the third argument of $r_1$ is $\kappa_1(-\bar{\lambda}_1
u)\ne0$ (it is nonzero at the reference state $\overline{U}_+$), by
the inverse function theorem, we infer that $|h_1(b)|\backsim
|p_1-\overline{p}|$, and similarly for the third-family,
$|h_3(b)|\backsim |p_2-\overline{p}|$.
Since $p_2=p_1$, we conclude
that $|h_1(b)|\backsim|h_3(b)|$. Here, for positive quantities $a$ and $b$,
we use $a\backsim b$ to mean that there is a positive constant
$C_1$ depending only on $\overline{U}_+$ so that $C_1^{-1} b\le
a<C_1 b$.

\section{Existence of a Semigroup}

Using the existence and uniqueness results established
in the earlier sections,
we can now establish the existence of the semigroup $\mathcal{S}$ of solutions
generated by the wave-front tracking algorithm.

\begin{proposition}
Assume that $\TV (\overline{U}(\cdot))$ is sufficiently small.
Then, for $\delta>0$, the map $(\overline{U}(\cdot), \xi) \mapsto
U^{\delta}(\xi,\cdot)\mathrel{\mathop:}
=\mathcal{S}^{\delta}_{\xi}(\overline{U}(\cdot))$ produced by
the wave-front tracking algorithm
is a uniformly Lipschitz continuous semigroup such that

\begin{itemize}
\item[(\rmnum{1})] $\mathcal{S}^{\delta}_{0}\overline{U} = \overline{U},
 \quad \mathcal{S}^{\delta}_{\xi_1}\mathcal{S}^{\delta}_{\xi_2}\overline{U}
 = \mathcal{S}^{\delta}_{\xi_1 + \xi_2}\overline{U}$;
\item[(\rmnum{2})] $\norm{\mathcal{S}^{\delta}_{\xi}\overline{U}
- \mathcal{S}^{\delta}_{\xi}\overline{V}}_{L^1(\mathbb{R}^+)} \leq
C\norm{\overline{U} - \overline{V}}_{L^1(\mathbb{R}^+)} + C\delta
\xi$.
\end{itemize}
\end{proposition}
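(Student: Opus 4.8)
The plan is to read off both assertions from the wave-front tracking construction of Section~2 together with the Lyapunov-functional estimates of Section~3. The identity $\mathcal{S}^{\delta}_{0}\overline{U}=\overline{U}$ is built into the algorithm, which is initialized precisely with the (piecewise constant) datum $\overline{U}(\cdot)$ at $\xi=0$. For the semigroup law $\mathcal{S}^{\delta}_{\xi_1}\mathcal{S}^{\delta}_{\xi_2}\overline{U}=\mathcal{S}^{\delta}_{\xi_1+\xi_2}\overline{U}$, the point is that the front-tracking evolution is autonomous in the ``time'' variable $\xi$: past any level $\xi_2$, the approximate solution is determined solely by the piecewise constant profile $U^{\delta}(\xi_2,\cdot)$ through the same lateral and interaction Riemann solvers (Lemmas~\ref{Lemma 2.1} and~\ref{lem:reflection}) and the same fixed resolution rules, regardless of how that profile was produced. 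Picking $\xi_2$ outside the finite set of interaction times of both the original and the restarted run and re-initializing the generation counters of the fronts, one checks that the restarted solution coincides with $\xi\mapsto U^{\delta}(\xi_2+\xi,\cdot)$ for all later $\xi$; for $\xi_2$ an interaction time one passes to the limit from $\xi_2^{+}$.

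The Lipschitz estimate (ii) is then immediate. Since both approximate solutions are built with the same accuracy $\delta$, combining the equivalence \eqref{eqlnorm} with the decay estimate \eqref{decpot} (in which $\delta_1=\delta_2=\delta$) gives
\begin{align}
\norm{\mathcal{S}^{\delta}_{\xi}\overline{U}-\mathcal{S}^{\delta}_{\xi}\overline{V}}_{L^1(\mathbb{R}^+)}
&\le C_{1}\,\mathit{\Phi}\big(U^{\delta}(\xi,\cdot),V^{\delta}(\xi,\cdot)\big)
\le C_{1}\Big(\mathit{\Phi}\big(U^{\delta}(0,\cdot),V^{\delta}(0,\cdot)\big)+\mathcal{O}(1)\delta\xi\Big) \nonumber\\
&\le C_{1}^{2}\,\norm{\overline{U}-\overline{V}}_{L^1(\mathbb{R}^+)}+C\delta\xi, \nonumber
\end{align}
with $C,C_1$ depending only on the system and $\overline{U}_+$, which is (ii). The remaining ingredient in ``uniformly Lipschitz continuous'' — Lipschitz dependence on $\xi$ — follows from the uniform-in-$\delta$ bound on the total strength of fronts furnished by the Glimm functional $\mathcal{G}$ and from the uniform bound on wave speeds near $\overline{U}_+$, giving $\norm{U^{\delta}(\xi_2,\cdot)-U^{\delta}(\xi_1,\cdot)}_{L^1(\mathbb{R}^+)}\le L(\xi_2-\xi_1)$.

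Two points must be checked to make the argument non-vacuous. First, that the restarted run stays in the regime where all of the above holds: this is guaranteed because $\mathcal{G}(\xi)$, hence $\TV(U^{\delta}(\xi,\cdot))$, is non-increasing along the front-tracking solution — including across interactions with the characteristic boundary $\{\eta=0\}$, thanks to the modified functional with the weight $k_{+}>K_{2}$ and the extra term $\mathcal{Q}_{b}$ (see Section~2 and Lemma~\ref{lem:reflection}) — so smallness of total variation is propagated and the restarted datum $U^{\delta}(\xi_2,\cdot)$ is admissible. Second, and this is where most of the care is needed, one must verify that re-initializing the algorithm at $\xi_2$ — in particular resetting the generation order used to decide which weak interactions are resolved by simplified (non-physical) fronts and which high-generation fronts are discarded — reproduces exactly the same sequence of fronts and intermediate states as the tail of the original run; this bookkeeping is carried out as in the Cauchy theory of \cite{Holden-Risebro-2002,Bressan-Liu-Yang-1999}, the only new feature being the boundary interactions, which are governed by Lemma~\ref{lem:reflection} and the modified Glimm functional and introduce no further obstruction.
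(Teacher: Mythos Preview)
Your proposal is correct and follows essentially the same approach as the paper: property~(i) is attributed to the construction of the front-tracking algorithm, and property~(ii) is obtained by chaining the equivalence \eqref{eqlnorm} with the decay estimate \eqref{decpot}, exactly as the paper does. You supply considerably more detail on~(i) --- restarting the algorithm at $\xi_2$, propagation of smallness via $\mathcal{G}$, the bookkeeping for generation orders --- which the paper simply asserts ``follows immediately''; one small slip is your mention of ``simplified (non-physical) fronts,'' whereas the version of front tracking used here (following \cite{Holden-Risebro-2002}) discards high-generation fronts rather than introducing non-physical ones, but this does not affect the argument for the proposition.
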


\begin{proof}
Property (i) follows immediately because $\mathcal{S}^{\delta}$ is
produced by the wave-front tracking method. We now prove property (ii).

Let $U^{\delta}$ and $V^{\delta}$ be the front tracking
$\delta$-approximate solutions of \eqref{prob} with the initial data
$\overline{U}=U_0(\cdot)$ and $\overline{V}=V_0(\cdot)$, respectively. By
\eqref{eqlnorm} and \eqref{decpot}, we obtain that, for any $\xi
\geq 0$,
\begin{eqnarray}
\norm{U^{\delta}(\xi) - V^{\delta}(\xi)}_{L^1(\mathbb{R}^+)} &\leq&
C
\Phi(U^{\delta}(\xi), V^{\delta}(\xi)) \nonumber \\
&\leq& C\Phi(U^{\delta}(0), V^{\delta}(0))
+ C_{1}\mathcal{O}(1)\delta \xi \nonumber \\
&\leq& C_{1}C_{2}\norm{\overline{U} -
\overline{V}}_{L^1(\mathbb{R}^+)} + C_{1}\mathcal{O}(1)\delta \xi.
\nonumber
\end{eqnarray}
This establishes the Lipschitz continuity of the $\delta$-semigroup
with respect to the initial data and time.
\end{proof}

\begin{definition}
Given $\delta_{0} > 0$, define a domain $\mathscr{D}$ as the closure of
the set consisting of the functions $U :\mathbb{R}_+ \mapsto
\mathbb{R}^3$ such that $U$ belongs to $L^1(\mathbb{R}_+; \mathbb{R}^3)$ by modulo a constant and $\BV (U-\overline{U}_+)(\mathbb{R}^+) \leq \varepsilon_{0}$.
\end{definition}

The semigroup $\mathcal{S}$ generated by the wave-front tracking method
is given by the subsequent theorem.

\begin{theorem}\label{semithm}
Let $\TV(\overline{U}(\cdot))$ be sufficiently small.
Then the sequence $\mathcal{S}^{\delta}$ generated from the front tracking algorithm
is a Cauchy sequence in the $L^1$--norm,
and the sequence $\mathcal{S}^{\delta}_{\xi}(\overline{U})$ converges
to a unique limit $\mathcal{S}_{\xi}(\overline{U})$ as $\delta \to 0$.
The map $\mathcal{S} :[0, \infty) \times \mathscr{D} \mapsto \mathscr{D}$
is a uniformly continuous semigroup.
In particular, there exists a constant $L$ such that,
for all $\xi_1, \xi_2 > 0$ and $\overline{U}$, $\overline{V}$ $\in \mathscr{D}$,
\begin{itemize}
\item[(\rmnum{1})] \textbf{Semigroup Property:} $\mathcal{S}_{0}\overline{U} = \overline{U}, \quad \mathcal{S}_{\xi_1}\mathcal{S}_{\xi_2}\overline{U} = \mathcal{S}_{\xi_1 + \xi_2}\overline{U}$;
\item[(\rmnum{2})] \textbf{Lipschitz Continuity:} $\norm{\mathcal{S}_{\xi}\overline{U} - \mathcal{S}_{\xi} \overline{V}}_{L^1(\mathbb{R}^+)} \leq L\norm{\overline{U} -
\overline{V}}_{L^1(\mathbb{R}^+)}$;
\item[(\rmnum{3})] {Each trajectory $\xi \mapsto \mathcal{S}_{\xi}\overline{U}$ yields a weak solution to the initial-boundary value problem \eqref{prob}};
\item[(\rmnum{4})]\textbf{Consistency with Riemann Solver:} {For any piecewise constant initial data $\overline{U} \in \mathscr{D}$, there exists a small $\theta > 0$ such that, for all $\xi \in [0,\theta]$, the trajectory
$U(\xi, \cdot) = \mathcal{S}_{\xi}\overline{U}$ agrees with the
solution of \eqref{prob} obtained by piecing together the standard
entropy solutions of the Riemann problems.}
\end{itemize}
\end{theorem}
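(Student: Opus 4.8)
The plan is to pass to the limit $\delta\to 0$ in the wave-front tracking approximations, using only the two fundamental estimates \eqref{l1equiv} and \eqref{decreases} together with the $\delta$-semigroup established in the preceding Proposition. First I would prove convergence: fix $\overline{U}\in\mathscr{D}$ and two accuracy parameters $\delta_1,\delta_2$, and apply \eqref{decreases} with $U_0=V_0=\overline{U}$ and $\xi_1=0$ to get $\mathit{\Phi}(U^{\delta_1}(\xi,\cdot),U^{\delta_2}(\xi,\cdot))\le \mathit{\Phi}(U^{\delta_1}(0,\cdot),U^{\delta_2}(0,\cdot))+C_2\max(\delta_1,\delta_2)\,\xi$. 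Since the front-tracking algorithm approximates the initial data with an $L^1$-error that vanishes with $\delta$ (the first of the four controlled error types), the right-hand side tends to $0$ as $\delta_1,\delta_2\to 0$, uniformly for $\xi$ in any bounded interval; by the equivalence \eqref{l1equiv}, $\{U^\delta(\xi,\cdot)\}$ is then a Cauchy sequence in $C([0,T];L^1(\mathbb{R}^+))$ for every $T>0$, hence converges to a limit which we name $\mathcal{S}_\xi(\overline{U})$. The same computation shows the limit is independent of the chosen sequence of approximations, so $\mathcal{S}$ is well defined; and because the $\BV$ and $L^1$ bounds supplied by the Glimm functional $\mathcal{G}$ are preserved under $L^1$-limits, $\mathcal{S}_\xi$ maps $\mathscr{D}$ into $\mathscr{D}$.

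Next I would read off the quantitative properties. The Lipschitz estimate (ii) follows by applying \eqref{decreases} to $U^{\delta_1}$ and $V^{\delta_2}$ with data $\overline{U},\overline{V}$ and $\xi_1=0$, then using \eqref{l1equiv} on both sides and letting $\delta_1,\delta_2\to 0$: since $\mathit{\Phi}(U^{\delta_1}(0,\cdot),V^{\delta_2}(0,\cdot))\le C_1\norm{U^{\delta_1}(0,\cdot)-V^{\delta_2}(0,\cdot)}_{L^1}\to C_1\norm{\overline{U}-\overline{V}}_{L^1}$, one obtains $\norm{\mathcal{S}_\xi\overline{U}-\mathcal{S}_\xi\overline{V}}_{L^1(\mathbb{R}^+)}\le L\norm{\overline{U}-\overline{V}}_{L^1(\mathbb{R}^+)}$ with $L=C_1^2$. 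Lipschitz continuity in $\xi$ comes from the uniform bounds on wave speeds and on total variation of the approximate solutions, which give $\norm{U^\delta(\xi_2,\cdot)-U^\delta(\xi_1,\cdot)}_{L^1}\le L'\abs{\xi_2-\xi_1}$, a bound that passes to the limit. For the semigroup property (i), the identity at $\xi=0$ is immediate; for $\mathcal{S}_{\xi_1}\mathcal{S}_{\xi_2}\overline{U}=\mathcal{S}_{\xi_1+\xi_2}\overline{U}$ I would start from the exact $\delta$-level identity $\mathcal{S}^{\delta}_{\xi_1}(\mathcal{S}^{\delta}_{\xi_2}\overline{U})=\mathcal{S}^{\delta}_{\xi_1+\xi_2}\overline{U}$ of the Proposition, insert the intermediate state $\mathcal{S}_{\xi_2}\overline{U}$, and control $\norm{\mathcal{S}^{\delta}_{\xi_1}(\mathcal{S}^{\delta}_{\xi_2}\overline{U})-\mathcal{S}^{\delta}_{\xi_1}(\mathcal{S}_{\xi_2}\overline{U})}_{L^1}$ by the uniform Lipschitz continuity of $\mathcal{S}^{\delta}_{\xi_1}$ on $\mathscr{D}$, which bounds it by $C\norm{\mathcal{S}^{\delta}_{\xi_2}\overline{U}-\mathcal{S}_{\xi_2}\overline{U}}_{L^1}+C\delta\xi_1\to 0$, while $\mathcal{S}^{\delta}_{\xi_1}(\mathcal{S}_{\xi_2}\overline{U})\to\mathcal{S}_{\xi_1}(\mathcal{S}_{\xi_2}\overline{U})$ by the very definition of $\mathcal{S}$; combining with the convergence of the left-hand side yields (i).

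Finally, property (iii) is the standard convergence-of-front-tracking statement: since $U^\delta\to\mathcal{S}_{\cdot}\overline{U}$ in $C([0,T];L^1)$ and each $U^\delta$ satisfies the weak formulation of \eqref{prob} up to an error controlled by $\delta$ (errors in the Rankine–Hugoniot relations, in the front speeds, and from removing high-generation fronts), one passes to the limit in the integral identity to conclude that $\mathcal{S}_{\cdot}\overline{U}$ is a weak entropy solution, as already noted at the end of Section 2. For property (iv), given piecewise constant $\overline{U}\in\mathscr{D}$ with finitely many jumps, I choose $\theta>0$ so small that, on $[0,\theta]$, the wave fans issuing from distinct jump points neither interact with one another nor reach the characteristic boundary $\{\eta=0\}$ (possible since the jump heights are bounded below and all wave speeds are bounded); on this strip the front-tracking solution coincides, after $\delta\to 0$, with the exact solution obtained by piecing together the self-similar Riemann solutions. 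The step I expect to require the most care is the semigroup property (i): one must ensure that $\mathcal{S}^{\delta}_{\xi_1}$ may be applied to the non-piecewise-constant state $\mathcal{S}_{\xi_2}\overline{U}$ (the algorithm first replaces it by a piecewise-constant approximation with $L^1$-error at most $\mathcal{O}(\delta)$) and that the Lipschitz constant in the Proposition is genuinely uniform in $\delta$, so that the double limit — in the accuracy $\delta$ and in the argument of $\mathcal{S}^{\delta}_{\xi_1}$ — can be interchanged.
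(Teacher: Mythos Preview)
Your proposal is correct and fleshes out the argument using only the Lyapunov estimates \eqref{l1equiv}--\eqref{decreases} together with the $\delta$-semigroup Proposition, exactly along the lines already indicated in Section~3 immediately after \eqref{decreases}. The paper's own treatment of Theorem~\ref{semithm} differs somewhat in emphasis: rather than deducing the Cauchy property directly from \eqref{decreases} with $U_0=V_0$, it singles out as the ``crucial estimate'' the Bressan--Colombo flow-approximation inequality \eqref{Mfest} and then defers to the arguments of \cite{Bressan-Colombo-1995}, adapting only for the Holden--Risebro variant of front tracking (removal of high-generation fronts in place of the simplified Riemann solver). Your route is more self-contained and avoids the Gronwall-type machinery of \eqref{Mfest}; the paper's route is less explicit here but pays off in Section~5, where \eqref{Mfest} is exactly the tool needed to identify viscosity solutions with semigroup trajectories. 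One small refinement: in your argument for (iv) you should not exclude the boundary entirely --- the limiting state $\overline{U}(0^+)$ need not satisfy $p=\overline{p}$, so the lateral Riemann problem of Lemma~\ref{Lemma 2.1} at the corner $(\xi,\eta)=(0,0)$ contributes one of the Riemann fans to be pieced together on $[0,\theta]$; it suffices that $\theta$ be small enough that this $3$-wave does not meet any of the interior fans.
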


Theorem \ref{semithm} implies that a sequence of $\delta$-approximate front tracking solutions to
\eqref{prob} converges to a unique entropy solution whose value is in $\mathscr{D}$,
and this unique limit is $L^1$--stable.
More precisely, we have the following immediate consequence from Theorem \ref{semithm}:

\begin{corollary}\label{co41}
Let $\TV(\overline{U}(\cdot))$ be sufficiently small. Then the entropy
solution to the initial-boundary value problem \eqref{prob} produced by
the wave-front tracking algorithm is $L^1$-stable and unique.
\end{corollary}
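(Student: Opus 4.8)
The plan is to obtain both assertions as direct specializations of Theorem~\ref{semithm}, i.e.\ ultimately of the Lyapunov-functional estimates \eqref{eqlnorm}--\eqref{decpot}; there is no new analysis to perform, only a short deduction.

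First I would prove uniqueness of the front-tracking solution. Fix initial data $\overline U\in\mathscr D$, and let $U^{\delta_1}$ and $U^{\delta_2}$ be two $\delta$-approximate front-tracking solutions of \eqref{prob} corresponding to the same data $\overline U$, with (possibly different) accuracies $\delta_1,\delta_2$ and (possibly different) admissible algorithmic choices. Taking $U_0=V_0=\overline U$ and $\xi_1=0$ in the decreasing estimate \eqref{decreases}, and using the equivalence \eqref{eqlnorm}, one gets
\[
\norm{U^{\delta_1}(\xi,\cdot)-U^{\delta_2}(\xi,\cdot)}_{L^1(\mathbb{R}^+)}\le C_1\,\mathit{\Phi}\big(U^{\delta_1}(\xi),U^{\delta_2}(\xi)\big)\le C_1 C_2\max(\delta_1,\delta_2)\,\xi\ \longrightarrow\ 0
\]
as $\delta_1,\delta_2\to0$, uniformly for $\xi$ in bounded intervals. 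Hence $\{U^\delta\}$ is Cauchy in $C([0,T];L^1(\mathbb{R}^+))$ for every $T>0$, and its limit $\xi\mapsto\mathcal{S}_\xi\overline U$ is independent of the approximating sequence and of the algorithmic choices. This is precisely the uniqueness statement in Theorem~\ref{semithm}, and it shows that the entropy solution of \eqref{prob} produced by the wave-front tracking algorithm is well-defined and unique.

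Next I would prove $L^1$-stability. For $\overline U,\overline V\in\mathscr D$, let $U^{\delta_1}$ and $V^{\delta_2}$ be the corresponding front-tracking approximations and set $\delta=\max(\delta_1,\delta_2)$. Combining \eqref{eqlnorm} with the decay estimate \eqref{decpot}, exactly as in the proof of the $\delta$-semigroup bound above, gives
\[
\norm{U^{\delta_1}(\xi,\cdot)-V^{\delta_2}(\xi,\cdot)}_{L^1(\mathbb{R}^+)}\le C_1\,\mathit{\Phi}\big(U^{\delta_1}(0),V^{\delta_2}(0)\big)+\mathcal{O}(1)\delta\,\xi\le C\,\norm{\overline U-\overline V}_{L^1(\mathbb{R}^+)}+\mathcal{O}(1)\delta\,\xi .
\]
Letting $\delta_1,\delta_2\to0$ and using the convergence established in the previous step, I obtain $\norm{\mathcal{S}_\xi\overline U-\mathcal{S}_\xi\overline V}_{L^1(\mathbb{R}^+)}\le C\,\norm{\overline U-\overline V}_{L^1(\mathbb{R}^+)}$, which is estimate \eqref{eq31} and item (ii) of Theorem~\ref{semithm}; this is the asserted $L^1$-stability.

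Since both conclusions are thus contained in Theorem~\ref{semithm}, the corollary is indeed immediate, and I do not expect any genuine obstacle. The only point that requires attention is the bookkeeping of constants and domains: one must check that $C_1$ in \eqref{eqlnorm} and $C_2$ in \eqref{decreases} depend only on the Euler system and the reference state $\overline U_+$ (not on $\delta$ nor on the particular solutions), that every approximate solution stays in the invariant domain $\mathscr D$ so that the total-variation smallness needed for \eqref{eqlnorm} holds uniformly in $\xi$, and that the $L^1$-distance is measured modulo the same additive constant used in the definition of $\mathscr D$. All of this has already been arranged in Sections~2--4; the substantive work lies in the Lyapunov-functional estimates \eqref{bdest}--\eqref{wkest} and in Theorem~\ref{semithm}, which are established earlier, so nothing further is required here.
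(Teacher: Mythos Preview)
Your proposal is correct and follows essentially the same route as the paper: the corollary is stated there as an ``immediate consequence'' of Theorem~\ref{semithm}, with the underlying mechanism being exactly the specialization of \eqref{l1equiv}--\eqref{decreases} (equivalently \eqref{eqlnorm}--\eqref{decpot}) that you spell out. The only microscopic omission is that, with identical data $\overline U$, the approximate initial data $U^{\delta_1}(0)$ and $U^{\delta_2}(0)$ need not literally coincide, so $\Phi\big(U^{\delta_1}(0),U^{\delta_2}(0)\big)$ is not zero but merely $o(1)$ as $\delta_1,\delta_2\to0$; this changes nothing in the argument.
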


The proof of Theorem \ref{semithm} relies on the subsequent crucial estimate
(also used in \cite{Bressan-Colombo-1995}) about the approximation of Lipschitz flows:

Suppose that $\mathcal{S} :[0, \infty) \times \mathscr{D} \to \mathscr{D}$ is
a global semigroup with a Lipschitz constant $L$.
Let $T > 0$, $\overline{Z} \in \mathscr{D}$, and $Z :[0, T] \mapsto \mathscr{D}$
be a continuous mapping taking values in piecewise constant functions,
with jumps along finitely many polygonal lines in the $(\xi,\eta)$--plane.
Then
\begin{equation}\label{Mfest}
\norm{Z(T) - \mathcal{S}_{T}\overline{Z}}_{L^1} \leq L \cdot \left\lbrace \norm{Z(0)
- \overline{Z}}_{L^1} + \int_{0}^{T} \varlimsup_{\mu \rightarrow
0^{+}} \frac{\norm{Z(\xi+\mu)-\mathcal{S}_{\mu}Z(\xi)}_{L^1}}{\mu}\, \dd
\xi\right\rbrace.
\end{equation}

Using the essential estimates shown in the earlier sections, Theorem
\ref{semithm} is shown along a similar line of arguments to
the one followed by Bressan-Colombo in \cite{Bressan-Colombo-1995}.
The only significant difference here between the wave-front tracking
by Bressan (cf. \cite{Bressan-2000}) and the version of the
method (cf. Holden and Risebro \cite{Holden-Risebro-2002}) we employ
is on how to control the number of fronts from growing to infinity
within finite time.
In \cite{Bressan-Colombo-1995}, the accurate Riemann solver ({\bf ARS})
and simplified Riemann solver ({\bf SRS}) are used to construct
the approximate solutions. With ({\bf ARS}), waves of each family can
be possibly introduced in the Riemann solution where every rarefaction wave
is divided into equal parts to obtain a rarefaction fan of wave-fronts,
while with ({\bf SRS}), all new waves are lumped together as a single
non-physical front, traveling faster than all wave speeds.
Furthermore, in \cite{Bressan-Colombo-1995},
the simplified Riemann solver, rather than the accurate Riemann solver,
is employed when the interaction term is less than a cut-off function
in the order of $\sqrt{\delta}$.
In the front tracking method we use, the approximate Riemann solution
is changed by removing weak fronts.
The point of this procedure is that the fronts of high generation are quite weak.
Particularly, in our arguments, when the interaction term is less than $\delta$,
this corresponds to new and removed fronts with generation higher than $N$
(here $N$ is a suitably large positive integer computed using the initial
approximation parameter $\delta$, see \cite{Chen-Kukreja-Yuan-2012}
and \cite{Holden-Risebro-2002} for further details).
We also refer to Chen-Li \cite{Chen-Li-2008} for a related analysis
with the full Euler system where ({\bf SRS}) is used in place of ({\bf ARS})
when the interaction term is less than $\delta$.

Thus, using an argument similar to \cite{Bressan-Colombo-1995},
one concludes that $\mathcal{S}^{\delta_{n}}_{\xi}\overline{Z}_{n}$ is a Cauchy
sequence, converging in the $L^1$--sense as long as the error from
removing weak fronts tends zero as the initial error parameter
$\delta \rightarrow 0$ (this has been proved in \cite{Chen-Kukreja-Yuan-2012}).
Consequently, the map $\mathcal{S} :[0, \infty) \times \mathscr{D} \mapsto \mathscr{D}$
defined as the limit of the approximate solutions produced by the front tracking
algorithm is well-defined.

Now, the proofs of statements (i) to (iv) in Theorem \ref{semithm} are as follows.
Statements (i), (ii), and (iv) are immediate
since $\mathcal{S}$ is the limit of front tracking approximate
solutions $\mathcal{S}^{\delta}$. It is similar to prove (iii) as
\cite{Bressan-Colombo-1995}, but, as discussed above, the wave-front tracking method
we employ here is slightly different. Lastly, we observe that the
entropy solution fulfills the boundary condition of the initial-boundary value
problem (\ref{prob}) due to the construction of our approximate solutions.
This completes the proof of Theorem \ref{semithm}.

\section{Uniqueness in the class of viscosity solutions}

In this section, we analyze an arbitrary Lipschitz semigroup defined
on the domain $\mathscr{D}$ of $\BV$ functions.
In particular, we prove that the semigroup $\mathcal{S}$ is uniquely determined
when the local flow is assigned in connection with the initial data,
a piecewise constant function.
Based on the results in Section 4, we first show that the semigroup $\mathcal{S}$
generated by the wave-front tracking method is the canonical trajectory of
the standard Riemann semigroup (SRS).
Then the uniqueness of entropy solutions is shown to extend to a broader class,
namely the class of viscosity solutions as introduced by Bressan in \cite{Bressan-1995}.
The essential part here is to show that, in the viscosity class,
the entropy solution matches the semigroup trajectory generated by the front tracking method.

\begin{definition}\label{SRS}
Problem \eqref{prob} is said to admit a {\it standard Riemann semigroup}
if, for some small $\varepsilon_{0} > 0$, there is a continuous
map $\mathcal{R} :[0, \infty) \times \mathscr{D} \mapsto \mathscr{D}$ and a
constant $L$ such that for every $\overline{U}, \overline{V} \in \mathscr{D}$ and $\xi_1, \xi_2 \ge 0$ we have:
\begin{itemize}
\item[(\rmnum{1})] $\mathcal{R}_{0}\overline{U} = \overline{U},
 \quad \mathcal{R}_{\xi_1}\mathcal{R}_{\xi_2}\overline{U} = \mathcal{R}_{\xi_1 + \xi_2}\overline{U}$;
\item[(\rmnum{2})] $\norm{\mathcal{R}_{\xi}\overline{U} -
 \mathcal{R}_{\xi}\overline{V}}_{L^1} \leq L\norm{\overline{U} -
  \overline{V}}_{L^1}$;
\item[(\rmnum{3})] If $\overline{U} \in \mathscr{D}$
is piecewise constant, then, for all $\xi \in [0,\varepsilon_0]$ sufficiently small, the trajectory $U(\xi, \cdot) = \mathcal{S}_{\xi}\overline{U}$ coincides with the solution of \eqref{prob} obtained by patching together the standard
entropy solutions of the Riemann problems produced by the discontinuities of  $\overline{U}$.
\end{itemize}
\end{definition}

\begin{theorem}\label{5.1}
Suppose that the initial-boundary value problem \eqref{prob} admits a standard Riemann semigroup $\mathcal{R} :[0,\infty) \times \mathscr{D} \mapsto \mathscr{D}$. Consider $\mathcal{S}$ the semigroup generated by the front tracking algorithm, that is, $\mathcal{S}_{\xi}(\overline{U}) = \lim_{\delta \rightarrow 0}S^{\delta}_{\xi}(\overline{U})$ with $\overline{U} \in \mathscr{D}$. Then, for all $\xi \ge 0$ and $\overline{U} \in \mathscr{D}$, $\mathcal{R}_{\xi}\overline{U} = \mathcal{S}_{\xi}\overline{U}$.
\end{theorem}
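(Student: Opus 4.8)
The plan is to derive $\mathcal{R}=\mathcal{S}$ from the abstract approximation estimate \eqref{Mfest}, applied with the Lipschitz semigroup there taken to be $\mathcal{R}$ and the comparison curve taken to be a front tracking $\delta$-approximate solution. Fix $\overline{U}\in\mathscr{D}$, $T>0$, and $\delta>0$, and set $Z(\xi)=S^{\delta}_{\xi}(\overline{U})$ for $\xi\in[0,T]$. By construction of the algorithm (with removal of fronts of generation $>N$), $Z$ is a continuous map $[0,T]\to\mathscr{D}$ that is piecewise constant in $\eta$ with jumps along finitely many polygonal lines, and $\|Z(0)-\overline{U}\|_{L^{1}}\to0$ as $\delta\to0$. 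Hence \eqref{Mfest}, with $\overline{Z}=\overline{U}$, gives
\begin{equation*}
\|Z(T)-\mathcal{R}_{T}\overline{U}\|_{L^{1}}\le L\Big\{\|Z(0)-\overline{U}\|_{L^{1}}+\int_{0}^{T}\varlimsup_{\mu\to0^{+}}\frac{\|Z(\xi+\mu)-\mathcal{R}_{\mu}Z(\xi)\|_{L^{1}}}{\mu}\,\dd\xi\Big\}.
\end{equation*}

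Next I would bound the integrand for a.e.\ $\xi\in[0,T]$. The finitely many interaction ``times'' (interior collisions of two fronts, or a $1$-front reaching the boundary $\{\eta=0\}$) form a Lebesgue-null set and are discarded. At any other $\xi$, the function $Z(\xi,\cdot)$ is piecewise constant and, for $\mu$ small, $Z(\xi+\mu,\cdot)$ is obtained simply by translating each front at its (slightly inexact) front-tracking speed, with no new interactions. On the other hand, by property (\rmnum{3}) of the standard Riemann semigroup, $\mathcal{R}_{\mu}Z(\xi)$ coincides for small $\mu$ with the function obtained by patching together the exact entropy solutions of the Riemann problems --- interior Riemann problems at the interior jumps, and the lateral Riemann problem of Lemma \ref{Lemma 2.1} at $\{\eta=0\}$ --- produced by the discontinuities of $Z(\xi)$. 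Subtracting, the discrepancy localizes to disjoint neighborhoods of the individual jump points of $Z(\xi)$ and is controlled, jump by jump, by the four error sources built into the front tracking algorithm: approximation of rarefaction waves by rarefaction fans, errors in the front speeds, the non-physical fronts, and the removal of fronts of high generation. Each contributes at most $\mathcal{O}(1)\delta$ times the local strength, and summing over the finitely many fronts yields
\begin{equation*}
\varlimsup_{\mu\to0^{+}}\frac{\|Z(\xi+\mu)-\mathcal{R}_{\mu}Z(\xi)\|_{L^{1}}}{\mu}\le\mathcal{O}(1)\,\delta,
\end{equation*}
uniformly in $\xi\in[0,T]$.

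Inserting this bound into the previous display gives $\|S^{\delta}_{T}(\overline{U})-\mathcal{R}_{T}\overline{U}\|_{L^{1}}\le L\big(\|Z(0)-\overline{U}\|_{L^{1}}+\mathcal{O}(1)\,\delta\,T\big)$. Letting $\delta\to0$ and invoking Theorem \ref{semithm}, the left-hand side tends to $\|\mathcal{S}_{T}\overline{U}-\mathcal{R}_{T}\overline{U}\|_{L^{1}}$ while the right-hand side tends to $0$; since $T>0$ and $\overline{U}\in\mathscr{D}$ are arbitrary (and the identity is trivial for $T=0$), we conclude $\mathcal{R}_{\xi}\overline{U}=\mathcal{S}_{\xi}\overline{U}$ for all $\xi\ge0$ and $\overline{U}\in\mathscr{D}$.

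The main obstacle will be the pointwise estimate of the integrand: it demands a careful, case-by-case local comparison of the front tracking evolution with the exact Riemann solutions at each type of front, and in particular a verification that the lateral Riemann problem solved along the characteristic boundary $\{\eta=0\}$ in the front tracking construction is exactly the one implicit in property (\rmnum{3}) of the standard Riemann semigroup. This is where the specific structure of problem \eqref{prob} --- the equal-pressure boundary condition on $\{\eta=0\}$ together with Lemmas \ref{Lemma 2.1} and \ref{lem:reflection} --- must be reconciled with the abstract hypotheses; once this local estimate is secured, the remainder is the standard Bressan--Colombo argument already used for Theorem \ref{semithm}.
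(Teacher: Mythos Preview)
Your proposal is correct and follows essentially the same approach as the paper: apply the fundamental estimate \eqref{Mfest} with the Lipschitz semigroup taken to be $\mathcal{R}$ and the comparison curve a front tracking $\delta$-approximation, bound the instantaneous error rate by $\mathcal{O}(1)\delta$ using the local Riemann structure shared by both flows, and pass to the limit. One small slip: among the error sources you list ``non-physical fronts,'' but the Holden--Risebro variant of front tracking used here does not introduce non-physical fronts --- it instead removes fronts of generation higher than $N$, which you already list separately --- so that item should be dropped.
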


Theorem \ref{5.1} is proved using similar arguments as in \cite{Bressan-1995}
based on the fundamental estimate \eqref{Mfest}
and the essential feature of the local flow in the $\xi$-direction,
that is, the wave-front tracking method and the standard Riemann semigroup
both have the structure of the Riemann solutions.

In what follows, we discuss some necessary and sufficient conditions
for a function $\xi \mapsto U(\xi) \in \mathscr{D}$ to coincide with
a semigroup trajectory.
Following Bressan \cite{Bressan-1995},
there are two types of local approximate parametrices for system \eqref{Lagr}.

The first approximate parametrice comes from the self-similar solution of
the Riemann problem.
To that end, consider a function $U :[0, \infty) \times \mathbb{R}_+ \mapsto \mathbb{R}^{3}$
and a fixed point $(\tau, \zeta)$ in the domain of $U$.
Suppose that $U(\tau, \cdot) \in \mathscr{D}$.
The boundedness of the total variation implies that the following limits exist:
$$
U^- = \lim_{\eta \to \zeta^{-}}U(\tau, \eta), \hspace{8mm} U^+ =
 \lim_{\eta \to \zeta^{+}}U(\tau, \eta).
$$
Let $\vartheta = \vartheta(\xi, \eta)$ be the solution of the Riemann problem with piecewise constant data $U^-$ and $U^+$
and, for $\xi > \tau$, define the following function:
\begin{equation}
H^{\sharp}_{(U, \tau, \zeta)}(\xi, \eta) =
\begin{cases} \vartheta(\xi - \tau, \eta - \zeta) & \text{if } |\eta - \zeta |
\leq \hat{\lambda}(\xi - \tau), \\
U(\tau, \eta) & \text{if } |\eta - \zeta | > \hat{\lambda}(\xi - \tau).
\end{cases} \nonumber
\end{equation}
Here $\hat{\lambda}$ is an upper bound for all wave speeds,
\begin{equation}\label{maxsp}
\sup_{U}|\lambda_{k}(U)| < \hat{\lambda}, \qquad k = 1,2,3.
\end{equation}

The other required parametrice is obtained from the corresponding quasilinear hyperbolic system \eqref{eq:3.15}
$$
A(U)\partial_\xi U + B(U)\partial_\eta U= 0
$$
by ``freezing" the coefficients of
the matrices ${A}(U)$ and ${B}(U)$ in a neighbourhood of the state $U(\tau, \zeta)$.

For $\xi > \tau$, define $H^{\flat}_{(U, \tau, \zeta)}$ as the solution of the
linear Cauchy problem with constant coefficients:
\begin{equation}\label{LCauchy}
\tilde{A}\partial_\xi M + \tilde{B}\partial_\eta M = 0, \hspace{8mm} M(\tau, \eta)
= U(\tau, \eta).
\end{equation}
where $\tilde{A}\mathrel{\mathop:}= {A}(U(\tau, \zeta))$ and
$\tilde{B}\mathrel{\mathop:}= {B}(U(\tau, \zeta))$ the matrices evaluated at the fixed state $U(\tau, \zeta)$.
Note that the functions $H^{\#}$ and $H^{\flat}$ depend on the values
$U(\tau, \zeta)$ and $U(\tau, \zeta\pm)$.
In the remaining, we introduce the class of viscosity solutions
and discuss how these solutions share the same local characterization as $H^{\#}$ and $H^{\flat}$.

\begin{definition}
Assume that $U :[0, T] \mapsto \mathscr{D}$ is a continuous map with respect to the
$L^1$--norm.
Then the function $U$ is called a viscosity solution of problem \eqref{prob}
if there are constants ${C}$ and $\hat{\lambda}$
providing bound \eqref{maxsp} such that, with $\beta$ and $\delta$ small enough,
we have, for every $(\tau, \zeta) \in [0, T) \times \mathbb{R}_+$,
\begin{eqnarray}
\frac{1}{\delta}\int\limits_{\zeta - \beta +
\delta\hat{\lambda}}^{\zeta + \beta - \delta\hat{\lambda}} |U(\tau +
\delta, \eta) - H^{\#}_{(U, \tau, \zeta)}(\xi, \eta)| \, d\xi
&\leq& {C}\TV\lbrace U(\tau): (\zeta - \beta, \zeta)
\cup (\zeta, \zeta + \beta) \rbrace, \nonumber \\
\frac{1}{\delta}\int\limits_{\zeta - \beta +
\delta\hat{\lambda}}^{\zeta + \beta - \delta\hat{\lambda}}| U(\tau +
\delta, \eta) - H^{\flat}_{(U, \tau, \zeta)}(\xi, \eta)| \, d\xi &\leq&
{C} \big(\TV\lbrace U(\tau): (\zeta - \beta, \zeta + \beta)
\rbrace\big)^{2}.  \nonumber
\end{eqnarray}
\end{definition}

\begin{theorem}\label{5.2}
Suppose that problem \eqref{prob} admits a standard Riemann semigroup $\mathcal{R}$.
Then a continuous mapping $U :[0, T] \mapsto \mathscr{D}$ is said to be a viscosity
solution of \eqref{prob} if and only if
\begin{equation}
U(\xi, \cdot) = \mathcal{R}_{\xi}\overline{U} \hspace{3mm} \text{at every }
\xi\in [0, T].
\end{equation}
\end{theorem}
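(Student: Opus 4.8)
The plan is to follow the classical equivalence argument of Bressan \cite{Bressan-1995} between viscosity solutions and semigroup trajectories, adapting it to the presence of the characteristic boundary $\{\eta=0\}$. The proof splits into the two implications.

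For the ``if'' direction, suppose $U(\xi,\cdot)=\mathcal{R}_\xi\overline{U}$. We must verify the two integral inequalities in the definition of a viscosity solution at each interior point $(\tau,\zeta)$. Away from the boundary (i.e.\ for $\zeta>0$ and $\beta$ small enough that $(\zeta-\beta,\zeta+\beta)\subset\mathbb{R}_+$) this is exactly the local estimate for the standard Riemann semigroup: by property (iii) in Definition \ref{SRS} and the Lipschitz continuity (ii), the semigroup trajectory is locally approximated to first order by the self-similar Riemann solution $\vartheta$, which gives the $H^{\#}$ bound, and the comparison between the nonlinear flow and its frozen-coefficient linearization $H^{\flat}$ yields the quadratic bound via a Taylor expansion of the wave curves; these are precisely the estimates established for $H^{\#}$ and $H^{\flat}$ as the two local parametrices. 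First I would invoke the fundamental estimate \eqref{Mfest}, applied with $Z(\xi)$ the piecewise-constant function built by patching the Riemann solutions $H^{\#}_{(U,\tau,\zeta)}$ on a partition of $\mathbb{R}_+$, to control $\|\mathcal{R}_T\overline{U}-Z(T)\|_{L^1}$ by the accumulated local errors, which are quadratic in the total variation on each interval; letting the mesh of the partition shrink recovers the pointwise inequalities.

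For the ``only if'' direction — which is the substantive part — assume $U:[0,T]\to\mathscr{D}$ is continuous in $L^1$ and satisfies the two viscosity inequalities. The goal is to show $\|U(T)-\mathcal{R}_T\overline{U}\|_{L^1}=0$. Here I would apply \eqref{Mfest} directly with $Z=U$ (after approximating $U$ by piecewise-constant functions with polygonal jump sets, using the continuity and BV bound of $U$): since $Z(0)=\overline{U}$, it suffices to prove
\begin{equation}\label{keylim}
\varlimsup_{\mu\to0^+}\frac{\|U(\xi+\mu)-\mathcal{R}_\mu U(\xi)\|_{L^1}}{\mu}=0\qquad\text{for a.e.\ }\xi\in[0,T].
\end{equation}
At a Lebesgue point $\xi=\tau$ of the BV function $U(\tau,\cdot)$, one decomposes $\mathbb{R}_+$ into finitely many small intervals; on intervals where $U(\tau,\cdot)$ has small oscillation one compares $U$ with $H^{\flat}$ and uses the quadratic bound, while on intervals containing a large jump of $U(\tau,\cdot)$ one compares with $H^{\#}$, using that $\mathcal{R}_\mu$ resolves the jump into the self-similar Riemann solution. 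Summing, the total error is $o(\mu)$ because the number of ``jump'' intervals is finite and the ``small oscillation'' contributions sum to something controlled by $\mu$ times the total variation squared, which can be made arbitrarily small. The one genuinely new point is the interval adjacent to the boundary $\{\eta=0\}$: there the local structure of $\mathcal{R}_\mu$ is governed by the lateral Riemann problem of Lemma \ref{Lemma 2.1} (the boundary condition $p=\overline{p}$ plus only a 3-wave), and one must check that $U$, being a viscosity solution satisfying the prescribed boundary pressure, matches this boundary parametrix to the same order. I would handle this by the argument already used in Section 3.3: near the boundary both $U$ and the semigroup carry the same pressure $\overline{p}$, so the Riemann data $U(\tau,0^+)$ is resolved by a single 3-wave whose strength is controlled, and the comparison reduces to the interior estimates restricted to a half-neighborhood.

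The main obstacle I anticipate is \eqref{keylim} at the boundary: one must verify that the viscosity inequalities (which are stated for interior points $(\tau,\zeta)\in[0,T)\times\mathbb{R}_+$) together with the boundary condition $p\equiv\overline{p}$ are enough to pin down the behaviour of $U$ in the boundary strip, i.e.\ that no ``boundary layer'' discrepancy can survive in the limit $\mu\to0$. This requires combining the well-posedness of the lateral Riemann problem (Lemma \ref{Lemma 2.1}) with the reflection estimate (Lemma \ref{lem:reflection}) to see that boundary interactions produce only higher-order corrections, exactly as in the decreasing-functional analysis of Section 3; once this is in place, the rest of the argument is the standard Bressan comparison and presents no essential difficulty beyond bookkeeping.
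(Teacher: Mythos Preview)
Your proposal is correct and follows essentially the same approach as the paper, which simply defers to Bressan \cite{Bressan-1995} with the remark that the only new feature is the straight characteristic boundary $\{\eta=0\}$, handled via the convergence of front tracking from Section~2 and \cite{Chen-Kukreja-Yuan-2012}. Your expansion---splitting into the two implications, using the error formula \eqref{Mfest} to reduce the ``only if'' direction to the local estimate \eqref{keylim}, and treating the boundary interval via the lateral Riemann solver of Lemma~\ref{Lemma 2.1}---is exactly the natural fleshing-out of that reference, and your identification of the boundary strip as the one place requiring genuine care matches the paper's own emphasis.
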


Using Theorem 5.2, one concludes that, for problem \eqref{prob},
the entropy solution is unique in the viscosity class of solutions,
which coincides with the semigroup trajectory $\mathcal{S}_{\xi}\overline{U}$
generated by the front tracking method. More precisely, we have

\begin{corollary}
A continuous mapping $U :[0, T] \mapsto
\mathscr{D}$ is a viscosity solution if and only if
\begin{equation}
U(\xi, \cdot) = \mathcal{S}_{\xi}\overline{U} \hspace{3mm} \textit{for any }
\xi\in [0, T].
\end{equation}
\end{corollary}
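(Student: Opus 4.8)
The plan is to derive this corollary as an immediate consequence of Theorem~\ref{5.2} together with Theorem~\ref{5.1}. The key observation is that Theorem~\ref{5.2} characterizes the viscosity solutions of problem~\eqref{prob} as exactly the trajectories of the standard Riemann semigroup $\mathcal{R}$, while Theorem~\ref{5.1} asserts that $\mathcal{R}$ and the front-tracking semigroup $\mathcal{S}$ coincide on $\mathscr{D}$: $\mathcal{R}_{\xi}\overline{U} = \mathcal{S}_{\xi}\overline{U}$ for all $\xi \ge 0$ and $\overline{U} \in \mathscr{D}$. Substituting this identity into the biconditional of Theorem~\ref{5.2} replaces $\mathcal{R}_{\xi}\overline{U}$ by $\mathcal{S}_{\xi}\overline{U}$, which is precisely the statement of the corollary.

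More concretely, I would proceed as follows. First, I would recall that problem~\eqref{prob} does admit a standard Riemann semigroup: by Theorem~\ref{semithm}, the front-tracking construction yields a uniformly continuous semigroup $\mathcal{S}:[0,\infty)\times\mathscr{D}\to\mathscr{D}$ satisfying the semigroup property, Lipschitz continuity in $L^1$, and consistency with the Riemann solver for piecewise constant data --- these are exactly properties (i)--(iii) in Definition~\ref{SRS}, so $\mathcal{S}$ itself serves as a standard Riemann semigroup $\mathcal{R}$. Hence the hypotheses of both Theorem~\ref{5.1} and Theorem~\ref{5.2} are satisfied. Second, by Theorem~\ref{5.2}, a continuous map $U:[0,T]\to\mathscr{D}$ (with respect to the $L^1$--norm) is a viscosity solution of~\eqref{prob} if and only if $U(\xi,\cdot)=\mathcal{R}_{\xi}\overline{U}$ for every $\xi\in[0,T]$, where $\overline{U}=U(0,\cdot)$. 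Third, by Theorem~\ref{5.1}, $\mathcal{R}_{\xi}\overline{U}=\mathcal{S}_{\xi}\overline{U}$ for all $\xi\ge 0$. Combining the last two statements gives that $U$ is a viscosity solution if and only if $U(\xi,\cdot)=\mathcal{S}_{\xi}\overline{U}$ for all $\xi\in[0,T]$, which is the assertion of the corollary.

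Finally, one should note the uniqueness content that this packages: since $\mathcal{S}_{\xi}\overline{U}$ is uniquely determined by the initial datum $\overline{U}$ (as the $L^1$--limit of $\mathcal{S}^{\delta}_{\xi}\overline{U}$ by Theorem~\ref{semithm}), the corollary shows that within the class of viscosity solutions there is at most one solution with a prescribed initial datum, and that it is exactly the one produced by front tracking. Since the excerpt provides Theorems~\ref{5.1} and~\ref{5.2} as already-established tools, there is no real obstacle in the proof of the corollary --- the only thing to be careful about is confirming that the initial datum $\overline{U}=U(0,\cdot)$ appearing in the corollary's statement is the same one implicit in both theorems, and that the class $\mathscr{D}$ and the smallness of $\TV(\overline{U})$ are consistent throughout; but this is a bookkeeping matter rather than a substantive difficulty.
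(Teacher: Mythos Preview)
Your proposal is correct and matches the paper's own reasoning: the corollary is obtained directly by combining Theorem~\ref{5.2} (viscosity solutions coincide with trajectories of the standard Riemann semigroup $\mathcal{R}$) with Theorem~\ref{5.1} (which identifies $\mathcal{R}$ with the front-tracking semigroup $\mathcal{S}$), after noting that Theorem~\ref{semithm} guarantees $\mathcal{S}$ is itself a standard Riemann semigroup. The paper treats this as an immediate consequence of Theorem~\ref{5.2}, and your write-up simply makes the use of Theorem~\ref{5.1} explicit.
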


With the estimates in Sections 2-3,
the proof here follows along a similar line of reasoning to
the one presented in \cite{Bressan-1995}.
The only difference is a straight boundary and physical domain
restricted to the positive $\eta$-axis;
nonetheless, we can still proceed with the proof provided that
the convergence of the wave-front
tracking method is obtained which has been outlined in Section 2 and
carried out in Eulerian coordinates in \cite{Chen-Kukreja-Yuan-2012}.

\begin{remark}
Note that, in simpler cases such as the potential flow,
isentropic or isothermal Euler flow, as far as the $L^1$--stability
problem is concerned,
we achieve the same results as the full Euler equations.
\end{remark}

\section{Uniqueness of Solutions to the Free Boundary
Problem in  Eulerian Coordinates}

In this section we apply the following Wagner's Theorem
\cite[Theorem 2]{Wagner-1987} to show
the uniqueness of entropy solution to
the free boundary problem \eqref{probeuler}.

\begin{theorem}[Wagner \cite{Wagner-1987}]\label{thmwagner}
Consider
\begin{eqnarray}\label{eqwagner}
\partial_t U+ \partial_x F(U)=0,
\end{eqnarray}
where $(t,x)\in\R^2$, $U(t,x)=(u_1,\cdots, u_n)^\top$,
and $F(U)=(f_1(U),\cdots, f_n(U))^\top\in\mathbb{R}^n$
is a smooth vector-function.
For any bounded measurable
solutions of \eqref{eqwagner}, with $u_1(t,x)\ge0$, let $(t,y)$
satisfy
$$
\frac{\p y}{\p x}=u_1(t,x),\qquad \frac{\p y}{\p t}=-f_1(U(t,x))
$$
in the sense of distributions. Then $T: (t,x)\mapsto (t, y(t,x))$ is
a Lipschitz-continuous transformation, which induces a one-to-one
correspondence between $L^\infty$ weak solutions of \eqref{eqwagner}
on $\mathbb{R}^+\times\mathbb{R}$ satisfying $0<\varepsilon\le
u_1(t,x)\le M<\infty$ for some $\varepsilon$ and $M$, and $L^\infty$
weak solutions of
\begin{eqnarray}
&&\partial_t\big(\frac{1}{u_1}\big)-\partial_y\big(\frac{f_1(U)}{u_1}\big)=0,\nonumber\\
&&\partial_t\big(\frac{1}{u_1}(u_2,\cdots,u_n)^\top\big)+\partial_y\big((f_2(U),\cdots,
f_n(U))^\top-\frac{1}{u_1}f_1(U)(u_2,\cdots,u_n)^\top\big)=0, \label{eqwag2}
\end{eqnarray}
on $\mathbb{R}^+\times\mathbb{R}$ satisfying $\varepsilon\le
u_1(t,x)\le M$.

If $\eta(U)$ is any convex extension of
\eqref{eqwagner}, i.e., there is a flux $q(U)$ such that $\nabla\eta
\nabla F=\nabla q,$ so that $\partial_t\eta+\partial_x q=0$ for classical solutions, then any
solution of \eqref{eqwagner}, satisfying
\begin{eqnarray}\label{eqwag3}
\partial_t\eta(U)+ \partial_x q(U)\le0
\end{eqnarray}
in the sense of distributions in $(t,x)$
corresponds to a solution of \eqref{eqwag2}, satisfying
\begin{eqnarray}\label{eqwag4}
\partial_t \tilde{\eta}(V)+\partial_y \tilde{q}(V)\le 0,
\end{eqnarray}
in the sense of distributions in $(t,y)$,
where $V=(v_1,\cdots, v_n)^\top$, $\tilde{\eta}(V)=\frac{\eta(U)}{u_1}$, and
$\tilde{q}(V)=q(U)-f_1(U)\tilde{\eta}(V)$. Furthermore, $\eta$ is
convex if and only if $\tilde{\eta}$ is convex as a function of $V$.
Thus, the Lax's entropy inequality holds for a solution of
\eqref{eqwagner} if and only if it holds for the corresponding
solution of \eqref{eqwag2}.
\end{theorem}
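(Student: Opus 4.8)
This is Wagner's theorem, so the plan is to follow his argument \cite{Wagner-1987}, which I outline. \textbf{First}, I would construct the transformation and show it is bi-Lipschitz. The key observation is that the first scalar law $\p_t u_1 + \p_x f_1(U)=0$ is exactly the integrability condition $\p_t(\p_x y)=\p_x(\p_t y)$ for the overdetermined system $\p_x y = u_1$, $\p_t y = -f_1(U)$; since $\R^+\times\R$ is simply connected, $y$ exists (uniquely up to an additive constant) and is Lipschitz, because $0<\varepsilon\le \p_x y = u_1\le M$ and $|\p_t y| = |f_1(U)|$ is bounded ($U\in L^\infty$, $f_1$ smooth). This is the same distributional potential construction already carried out for $\eta$ in Section 2. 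Being Lipschitz, $y$ satisfies these identities pointwise a.e.; for every fixed $t$ the map $x\mapsto y(t,x)$ is strictly increasing with $y(t,x)-y(t,0)\in[\varepsilon x, Mx]$ for $x>0$, hence a bi-Lipschitz bijection of $\R$ onto $\R$. Computing the inverse gives $\p_y x = 1/u_1\in[1/M,1/\varepsilon]$ and $\p_\tau x = f_1(U)/u_1$, both bounded, so $T:(t,x)\mapsto(\tau,y):=(t,y(t,x))$ is a bi-Lipschitz homeomorphism of $\R^+\times\R$ onto itself, in particular proper, with Jacobian determinant $u_1$.

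\textbf{Second}, I would transport conservation laws through $T$ by pulling back test functions. For any flux pair $(P(U),Q(U))\in L^\infty$ with $\p_t P+\p_x Q = 0$ (resp. $\le 0$) in $\mathscr{D}'$, and any $\psi\in C_c^\infty(\R^+\times\R)$, set $\tilde\psi:=\psi\circ T$, which is Lipschitz, of the same sign as $\psi$, and compactly supported since $T$ is proper. Using the a.e. chain-rule identities $\p_x\tilde\psi = u_1\,\p_y\psi$ and $\p_t\tilde\psi = \p_\tau\psi - f_1(U)\,\p_y\psi$ (the $\psi$-derivatives evaluated at the transformed point) together with the area formula $\dd\tau\,\dd y = u_1\,\dd t\,\dd x$, one verifies the exact identity
\[
\iint \left( \frac{P}{u_1}\,\p_\tau\psi + \Bigl( Q - \frac{P\,f_1}{u_1} \Bigr)\p_y\psi \right) \dd\tau\,\dd y \;=\; \iint \bigl( P\,\p_t\tilde\psi + Q\,\p_x\tilde\psi \bigr)\,\dd t\,\dd x.
\]
Since the weak formulation of $\p_t P+\p_x Q = 0$ (resp. $\le 0$) extends from $C_c^1$ to (nonnegative) Lipschitz test functions by mollification, the right-hand side is $0$ (resp. $\ge 0$), so $(P/u_1,\ Q-Pf_1/u_1)$ satisfies the corresponding law in $(\tau,y)$. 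Taking $(P,Q)=(1,0)$ and $(P,Q)=(u_j,f_j(U))$ for $j=2,\dots,n$ produces exactly system \eqref{eqwag2}, with the bound $\varepsilon\le u_1\le M$ carried along; taking $(P,Q)=(\eta(U),q(U))$ with $\p_t\eta+\p_x q\le 0$ produces \eqref{eqwag4} with $\tilde\eta=\eta/u_1$, $\tilde q=q-f_1\tilde\eta$. Running the same argument with the (same-structure) inverse map $T^{-1}$ gives the reverse implications, which establishes the claimed one-to-one correspondence.

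\textbf{Third}, for the convexity assertion, write $V=(v_1,v')$ with $v'=(v_2,\dots,v_n)$ and $v_1=1/u_1$, so that $(u_1,u_2,\dots,u_n)=(1,v')/v_1$ and hence $\tilde\eta(v_1,v')=v_1\,\eta\bigl((1,v')/v_1\bigr)$. Thus $\tilde\eta$ is the restriction to the affine hyperplane $\{y_1=1\}$ of the perspective function $(s,y)\mapsto s\,\eta(y/s)$ of $\eta$; since the perspective of a convex function is convex and restriction to an affine subspace preserves convexity, $\eta$ convex implies $\tilde\eta$ convex. As the map $U\mapsto V$ coincides in form with its inverse (one also has $\eta(U)=u_1\,\tilde\eta\bigl((1,u_2,\dots,u_n)/u_1\bigr)$), the converse follows identically. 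Combined with Step 2 applied to an arbitrary entropy pair, this gives the final statement on Lax's entropy inequality.

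\textbf{The main obstacle} is the rigor of Step 2: justifying the change of variables for merely bounded measurable solutions under a coordinate change that is only bi-Lipschitz, not $C^1$. The ingredients are standard but must be invoked with care — Rademacher's theorem (so $y$, $T$, $T^{-1}$ are differentiable a.e. and the distributional identities for $y$ are a.e. pointwise identities), the area formula for Lipschitz maps, the a.e. chain rule for $\psi\circ T$ with $\psi\in C_c^\infty$, and the density argument extending the weak and entropy formulations to Lipschitz test functions. The uniform two-sided bound $\varepsilon\le u_1\le M$ is precisely what makes $T$ bi-Lipschitz and proper, hence makes all these tools applicable; without the lower bound $u_1\ge\varepsilon$ the transformation could degenerate and the correspondence would break down.
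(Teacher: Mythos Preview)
The paper does not supply its own proof of this statement: Theorem~\ref{thmwagner} is simply quoted from Wagner \cite[Theorem~2]{Wagner-1987} and then applied, with the remark that it extends to initial--boundary value problems by slight modification. There is therefore no proof in the paper to compare your attempt against.

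That said, your outline is a faithful and correct sketch of Wagner's argument. The three steps --- construction of the bi-Lipschitz potential $y$ from the first conservation law, the test-function change-of-variables identity (which you have written down correctly; both sides do reduce to $\iint\bigl(P\,\p_\tau\psi + (Qu_1 - Pf_1)\,\p_y\psi\bigr)\,\dd t\,\dd x$), and the perspective-function argument for convexity --- are exactly the ingredients of \cite{Wagner-1987}, and the technical caveats you flag (Rademacher, area formula, extension of the weak formulation to Lipschitz test functions, the necessity of the two-sided bound on $u_1$) are the right ones. The choice $(P,Q)=(1,0)$ to recover the first equation of \eqref{eqwag2} is a nice way to put all cases under one computation. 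Nothing is missing at the level of a proof plan.
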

As indicated by Wagner \cite[p.123]{Wagner-1987}, this theorem may
also be applied to initial-boundary value problems by slight
modification. Thus, we can also use it in our case. Suppose that $U_0^1(y)$
and $U_0^2(y)$ are two initial data functions of problem \eqref{probeuler} in
Eulerian coordinates, and $(U^1, g^1)$ and $(U^2, g^2)$ are the
corresponding weak entropy solutions obtained by the front tracking
method. We have known that $U^1$ and $U^2$ must be bounded. We now show
that $U^1=U^2$ and $g^1=g^2$, provided $U_0^1=U_0^2$.

Suppose that $V^1$ and
$V^2$ are the solutions of \eqref{prob} in Lagrangian coordinates
that correspond to $U^1$ and $U^2$, respectively. By Theorem \ref{thmwagner},
it suffices to show that $V^1=V^2$. Then, from \eqref{eqfree}, we may find
$g^1=g^2$. Therefore, by the uniqueness results of problem \eqref{prob}
(Corollary \ref{co41}), it suffices to prove the following lemma.
\begin{lemma}
If $U_0^1(y)=U_0^2(y)$, then $V^1(0,\eta)=V^2(0,\eta)$.
\end{lemma}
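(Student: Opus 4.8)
The plan is to recognize that this lemma is really a statement about the Lagrangian change of variables restricted to the initial line $\{x=0\}$, and that this change of variables is determined by the Eulerian datum $U_0^i$ alone; hence equal Eulerian data force equal Lagrangian data, with essentially no input from the (nonlinear, free-boundary) evolution.

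First I would identify $V^i(0,\cdot)$ explicitly. For each Eulerian entropy solution $U^i$ ($i=1,2$) obtained by front tracking, the associated Lagrangian coordinate $\eta^i=\eta^i(x,y)$ vanishes on $\mathcal{D}_{U^i}=\{y=g^i(x)\}$, and since both free boundaries pass through the origin we have $\eta^i(0,0)=0$. Taking the vertical segment through the origin as the integration contour in the definition of $\eta^i$ — legitimate because $(0,0)\in\mathcal{D}_{U^i}$ and $\eta^i$ is path-independent and Lipschitz continuous by the simple connectedness of the flow domain together with the conservation of mass and the boundary relation in \eqref{probeuler} (the low-regularity case being handled by the mollification argument of Section 2) — and using \eqref{etaeq}, the trace of $\eta^i$ on $\{x=0\}$ is
\[
\eta^i(0,y)=\int_0^y (\rho u)^i(0,t)\,\dd t=\int_0^y(\rho_0^i u_0^i)(t)\,\dd t=:\Lambda^i(y),\qquad y\ge0,
\]
where $(\rho_0^i,u_0^i)$ are the density and horizontal-velocity components of the Eulerian datum $U_0^i=(u_0^i,v_0^i,p_0^i,\rho_0^i)$. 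Since $U_0^i$ is $\BV$--close to $\overline{U}_+$ with $\overline{u}>0$ and $\overline{\rho}_+>0$, the integrand $\rho_0^i u_0^i$ is bounded below by a positive constant, so $\Lambda^i$ is a strictly increasing bi-Lipschitz homeomorphism of $\R^+$ onto $\R^+$. By the definition \eqref{eq:3.10} of the transformation, the Lagrangian initial datum is then the Eulerian one read off at $y=(\Lambda^i)^{-1}(\eta)$:
\[
V^i(0,\eta)=\big(u_0^i,v_0^i,p_0^i\big)\!\big((\Lambda^i)^{-1}(\eta)\big),\qquad \eta\ge0.
\]

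Given this, the conclusion is immediate: if $U_0^1\equiv U_0^2$, then in particular $\rho_0^1u_0^1\equiv\rho_0^2u_0^2$, hence $\Lambda^1\equiv\Lambda^2$ and $(\Lambda^1)^{-1}\equiv(\Lambda^2)^{-1}$; plugging back together with $U_0^1\equiv U_0^2$ gives $V^1(0,\eta)=V^2(0,\eta)$ for all $\eta\ge0$, which is the claim.

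The main — and essentially only — delicate point is the first step: showing that the a priori path-dependent object $\eta^i$, restricted to the boundary line $\{x=0\}$, equals the explicit primitive of $\rho_0^iu_0^i$. This is where one must invoke the properties of $\eta^i$ established in Section 2 (path-independence, Lipschitz regularity up to the boundary, the fact that $(0,0)\in\mathcal{D}_{U^i}$, and that $U_0^i$ is the trace of $U^i$ on $\{x=0\}$). Once these are granted, the remainder is an elementary manipulation of a monotone one-dimensional change of variable, and I expect no further obstacle.
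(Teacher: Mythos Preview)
Your proof is correct and follows essentially the same approach as the paper: both arguments observe that on the initial line $\{x=0\}$ the Lagrangian coordinate is obtained by integrating $(\rho u)(0,\cdot)$ from the origin, so that equal Eulerian data force the two Lagrangian transforms to agree there, and hence the pulled-back initial data coincide. Your version is slightly more explicit in naming the primitive $\Lambda^i$ and its bi-Lipschitz inverse, but the content is identical.
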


\begin{proof}
For $j=1,2$, let $T^j: (x,y)\mapsto (\xi,\eta)$ be the Lagrangian
transform \eqref{eq:3.10} associated with the solution $U^j$. They
are Lipschitz-continuous and one-to-one. Furthermore, we know
$U^j(x,\cdot)$ is of bounded variation and $\lim_{x\to
x_0}\norm{U^j(x,\cdot)-U^j(x_0,\cdot)}_{L^1}=0$.
Thus, we can not only
solve $\eta$ from \eqref{etaeq}, but also those equations make sense
on each line $x=\mathrm{constant}$.

We first show that, if $U_0^1(y)=U_0^2(y)$, then $T^1|_{x=0}=T^2|_{x=0}$.

Suppose $(\xi_j,\eta_j)=T^j(x,y)$. Recall by definition that $\xi=x$.
Then, at $x=0$, we have both $\xi_1=\xi_2=0$.
Note that $\eta_j$ satisfies
$\eta_j(0,0)=0$ and solves
$$
\frac{\p \eta_j(x,y)}{\p y}|_{x=0}=(\rho u)^{j}(0,y).
$$
Since $(\rho u)^{1}(0,y)=(\rho u)^{2}(0,y)$ from
$U_0^1(y)=U_0^2(y)$, we conclude that $\eta_1(0,y)=\eta_2(0,y)$ as
well. This shows that $T^1(0,y)=T^2(0,y)$, which implies that
$(T^1)^{-1}(0,\eta)=(T^2)^{-1}(0,\eta)$ and lies on $\{x=0\}$.
Since
$$
V^j(\xi,\eta)=U^j((T^j)^{-1}(\xi,\eta)),
$$
we conclude
$$
V^1(0,\eta)=U^1((T^1)^{-1}(0,\eta))=U^1((T^2)^{-1}(0,\eta))=
U^2((T^2)^{-1}(0,\eta))=V^2(0,\eta).
$$
\end{proof}

Finally, summarizing all the analysis above, we state the main
theorem of this paper.

\begin{theorem}\label{thmmain}
For the initial data $U_0$ close to the reference state
$\overline{U}_+$ in the sense that
$\norm{U_0-\overline{U}_+}_{\BV}$ is sufficiently small, there is
one and only one weak entropy solution  $(U,g)$ to problem
\eqref{probeuler} constructed by the front tracking method. Furthermore,
reformulating this problem in Lagrangian coordinates as problem
\eqref{prob}, then the $L^1$--stability holds in the sense that
$$\norm{V^1(\xi)-V^2(\xi)}_{L^1(\mathbb{R}^+)}\le
C \norm{V^1(0)-V^2(0)}_{L^1(\mathbb{R}^+)}$$ for any two solutions
$V^1$ and $V^2$ of \eqref{prob}. The solution is also unique in the
class of viscosity solutions in Lagrangian coordinates.
\end{theorem}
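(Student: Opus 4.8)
The plan is to assemble Theorem~\ref{thmmain} from the pieces already established in Sections~2--6, treating it as a summary statement rather than a result requiring fresh arguments. The three assertions --- existence/uniqueness of $(U,g)$ for problem \eqref{probeuler}, $L^1$--stability in Lagrangian coordinates, and uniqueness in the viscosity class --- each follow from a specific earlier result, so the proof is essentially a bookkeeping exercise that routes each claim through the Lagrangian reformulation and Wagner's Theorem~\ref{thmwagner}. First I would invoke the existence theorem from \cite{Chen-Kukreja-Yuan-2012}: for $\norm{U_0-\overline{U}_+}_{\BV}$ small, there is a weak entropy solution $(U,g)$ to \eqref{probeuler} constructed by front tracking. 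Then, passing to Lagrangian coordinates via the transformation \eqref{eq:3.10}, this solution corresponds to a solution $V$ of problem \eqref{prob}, and conversely, by \eqref{eqfree}, any solution of \eqref{prob} yields the free boundary $g$ and hence a solution of \eqref{probeuler}. This correspondence is legitimate because of Wagner's Theorem~\ref{thmwagner} applied (with the indicated minor modification for initial-boundary value problems) to the steady Euler system with $x$ playing the role of $t$ and the mass flux $\rho u$ playing the role of $u_1 > 0$.

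Next I would establish uniqueness of $(U,g)$ for \eqref{probeuler}. By Corollary~\ref{co41}, the entropy solution of problem \eqref{prob} produced by the front tracking algorithm is unique. Given two Eulerian solutions $(U^1,g^1)$ and $(U^2,g^2)$ with $U_0^1 = U_0^2$, the Lemma preceding Theorem~\ref{thmmain} shows their Lagrangian counterparts $V^1, V^2$ have identical initial data $V^1(0,\eta) = V^2(0,\eta)$; then Corollary~\ref{co41} forces $V^1 = V^2$ on $\{\xi>0,\eta>0\}$, whence $g^1 = g^2$ via \eqref{eqfree} and $U^1 = U^2$ by the one-to-one correspondence of Theorem~\ref{thmwagner}. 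For the $L^1$--stability statement, I would quote \eqref{eq31} (equivalently \eqref{eqlnorm} together with \eqref{decpot} in the limit $\delta \to 0$): for two solutions $V^1, V^2$ of \eqref{prob} obtained by front tracking,
\begin{equation*}
\norm{V^1(\xi,\cdot) - V^2(\xi,\cdot)}_{L^1(\mathbb{R}^+)} \le C\,\norm{V^1(0,\cdot) - V^2(0,\cdot)}_{L^1(\mathbb{R}^+)},
\end{equation*}
with $C$ depending only on the system and $\overline{U}_+$. Finally, the viscosity-class uniqueness is exactly the Corollary following Theorem~\ref{5.2}: a continuous map $U:[0,T]\mapsto\mathscr{D}$ is a viscosity solution of \eqref{prob} if and only if it coincides with the semigroup trajectory $\mathcal{S}_\xi\overline{U}$, which by Theorem~\ref{5.1} agrees with the standard Riemann semigroup $\mathcal{R}$.

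Since this is a summary theorem, there is no single hard technical obstacle; all the analysis has been carried out in the preceding sections. The only point requiring genuine care is \emph{consistency of the coordinate frames}: one must check that the notion of ``two solutions $V^1, V^2$ of \eqref{prob}'' in the stability statement matches the notion coming from Eulerian data, i.e.\ that the Lagrangian map $T^j$ is well-defined on $\{x=0\}$ and that $V^j(\xi,\cdot)$ is the $L^1$-continuous trace of the Eulerian solution. This is precisely what the Lemma before the theorem secures (using $\lim_{x\to x_0}\norm{U^j(x,\cdot)-U^j(x_0,\cdot)}_{L^1} = 0$, so that \eqref{etaeq} makes sense on each line $x = \mathrm{constant}$). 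The remaining subtlety --- that Wagner's correspondence respects the Lax entropy inequality in both directions, so that ``weak entropy solution'' in Eulerian coordinates and ``entropy solution'' of \eqref{prob} are genuinely in bijection --- is handled by the last part of Theorem~\ref{thmwagner} (convexity of $\eta$ is equivalent to convexity of $\tilde\eta$). With these verifications in place, the three assertions of Theorem~\ref{thmmain} follow by concatenating the cited results, completing the proof.
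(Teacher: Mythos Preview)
Your proposal is correct and matches the paper's approach: Theorem~\ref{thmmain} is explicitly presented in the paper as a summary of ``all the analysis above'' with no separate proof, and you have accurately identified the provenance of each clause (existence from \cite{Chen-Kukreja-Yuan-2012}, uniqueness in Eulerian coordinates via the Lemma and Corollary~\ref{co41} through Wagner's Theorem~\ref{thmwagner}, $L^1$--stability from \eqref{eq31}/\eqref{decpot}, and viscosity-class uniqueness from the Corollary to Theorem~\ref{5.2}). Your additional remarks on the consistency of the coordinate frames and the entropy-inequality correspondence are apt and reflect exactly the care the paper takes in Section~6.
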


\bigskip

{\bf Acknowledgments.} The research of Gui-Qiang Chen was supported in part by the National Science Foundation under Grant DMS-0807551, the UK EPSRC Science and Innovation Award to the Oxford Centre for Nonlinear PDE (EP/E035027/1), the NSFC under a joint project Grant 10728101, and the Royal Society-Wolfson Research Merit Award (UK).
Vaibhav Kukreja was supported in part by the National Science Foundation under Grant DMS-0807551 and
the UK EPSRC Science and Innovation Award to the Oxford Centre for Nonlinear PDE (EP/E035027/1).
Hairong Yuan is supported in part by National Natural Science Foundation of China under Grant No. 10901052,
China Scholarship Council (No. 2010831365),
Chenguang Program (No. 09CG20) sponsored by Shanghai Municipal Education Commission
and Shanghai Educational Development Foundation,
and a Fundamental Research Funds for the Central Universities.


\end{document}